\newtheorem{theorem}{Theorem}[section]
\newtheorem{fact}[theorem]{Fact}
\newtheorem{prop}[theorem]{Proposition}
\newtheorem{coro}[theorem]{Corollary}
\newtheorem{lem}[theorem]{Lemma}
\newtheorem{conj}[theorem]{Conjecture}
\newtheorem{obs}[theorem]{Observation}
\newtheorem{defn}[theorem]{Definition}
\numberwithin{equation}{section}
\newcommand\be{\begin{equation}}
\newcommand\ee{\end{equation}}
\newcommand\bea{\begin{eqnarray}}
\newcommand\eea{\end{eqnarray}}
\newcommand\R{\mathbb{R}}
\newcommand\N{\mathbb{N}}
\newcommand\E{\mathbb{E}}
\newcommand\nn{\nonumber}
\newcommand{\OR}{\text{OR}}
\newcommand{\AND}{\text{AND}}
\newcommand{\MAJ}{\text{MAJ}}
\newcommand{\poly}{\text{poly}}
\newcommand{\Inf}{\text{Inf}}
\newcommand{\DT}{\text{DT}}
\newcommand{\I}{\textbf{I}}
\newcommand{\bls}{\text{bs}}
\newcommand{\adeg}{\widetilde{\deg}}
\newcommand{\sens}{\text{sens}}
\newcommand{\cert}{\text{cert}}
\newcommand{\MAF}{\text{MAF}}
\newcommand{\Rel}{\text{\fontfamily{lmss}\selectfont
Rel}}
\begin{document}

\begin{frontmatter}[classification=text]

\title{Relationships between the number of inputs and other complexity measures of Boolean functions}

\author[jake]{Jake Wellens}

\begin{abstract}
We generalize and extend the ideas in a recent paper of Chiarelli, Hatami and Saks to prove new bounds on the number of relevant variables for Boolean functions in terms of a variety of complexity measures. Our approach unifies and refines all previously known bounds of this type. We also improve Nisan and Szegedy's well-known inequality $\bls(f) \leq \deg(f)^2$ by a constant factor, thereby improving Huang's recent bound on the sensitivity conjecture by the same constant. 
\end{abstract}
\end{frontmatter}

\section{Introduction}

Is a Boolean function $f: \{0,1\}^n \to \{0,1\}$ necessarily ``complex'' simply because it takes many input variables?\footnote{Of course, one has to count only the \textit{relevant} inputs, ignoring any dummy variables which $f$ does not actually need.} In 1983, Simon \cite{Simon} answered this question in the affirmative, showing that the number of relevant variables $n(f)$ of a Boolean function $f$ is bounded above by $s(f)4^{s(f)}$, where $s(f)$ is the sensitivity of $f$. Combined with earlier work by Cook, Dwork and Rieschuk \cite{CD} showing a $\Omega(\log s(f))$ lower bound on the parallel (CREW-PRAM) complexity of $f$, Simon's theorem implies that any function with $n$ relevant inputs takes $\Omega(\log \log n)$ time to evaluate on the worst case input, even with an \textit{unbounded} number of processors working in parallel. A decade later, Nisan and Szegedy \cite{NS} proved a similar upper bound on $n(f)$ in terms of the degree of $f$, namely 
\be \label{Nisan_Szeg}
n(f) \leq \deg(f) \cdot 2^{\deg(f) - 1},
\ee
which was very recently improved to 
\be \label{CHS_bd}
n(f) \leq 6.614 \cdot 2^{\deg(f)}
\ee
by Chiarelli, Hatami and Saks \cite{CHS}, which is tight up to the constant factor. While the proof of (\ref{Nisan_Szeg}) uses the average sensitivity or total influence $\I[f]$ as a potential function, the proof of (\ref{CHS_bd}) requires a potential based on a local version of degree for each coordinate $i$, namely $\deg_i(f) = \deg(f(x) - f(x \oplus e_i))$. 

In this paper, we generalize the measure $\deg_i$ to a class of measures with the same essential properties. This provides a common framework for proving nearly-tight bounds on $n(f)$ in terms of various complexity measures. In particular, we give short, unified proofs of the theorems of Simon, Nisan-Szegedy and Chiarelli-Hatami-Saks, as well as a variety of new and improved bounds, which we summarize in the following theorem.

\begin{theorem}\label{main_intro}
For any Boolean function $f$,
\bea 
n(f) &\leq& 4.394 \cdot 2^{\deg(f)} \label{CHS_new} \\ 
n(f) &\leq& \frac{1}{2}\cdot 4^{C(f)}.  
\eea
Moreover, if $f$ is monotone, then
\be \nn
n(f) \leq \min\left\{1.325 \cdot 2^{\deg(f)}, \, \, \frac{1}{2}\cdot 4^{s(f)}, \, \, \frac{1}{4}\cdot 2^{\emph{\DT}(f)} + 2   \right\}.
\ee
\end{theorem}

Prior to our work, the best bounds on $n(f)$ in terms of $s(f)$ or $C(f)$ came from Simon's theorem \cite{Simon}, namely $n(f) \leq s(f)\cdot 4^{s(f)}$ (and therefore $n(f) \leq C(f) \cdot 4^{C(f)}$). While the bound in (\ref{CHS_new}) is tight up to the constant (in \cite{CHS}, the authors also give an example with $n(f) \geq 1.5 \cdot 2^{\deg(f)} - 2$), the gaps between the other bounds in Theorem \ref{main_intro} and the best-known examples are still polynomial in the relevant complexity measure. In fact, the largest-known separations all come from Wegener's monotone address function \cite{Weg}, which is a monotone function with 
$$n(f) = \Theta\left(\frac{2^{\DT(f)}}{\sqrt{\DT(f)}}  \right) = \Theta\left(\frac{2^{\deg(f)}}{\sqrt{\deg(f)}}\right) = \Theta\left(\frac{4^{s(f)}}{\sqrt{s(f)}}\right).$$
We believe that closing the gap for any of these complexity measures is a fundamental challenge left open by our work. 

The proof of inequality (\ref{CHS_new}) relies in part on improved upper bounds on block sensitivity ($\bls(\cdot)$) by degree for Boolean functions with degree up to $d = 14$, which we obtain via a natural LP relaxation. The bounds from the LP appear to suggest that, roughly, $\bls(f) \leq 0.6 \cdot \deg(f)^2$, which would be a constant factor improvement over the previously best-known bound of $\bls(f) \leq \deg(f)^2$, due to Nisan and Szegedy \cite{NS} (and later optimized by Tal \cite{Tal}). In Section \ref{sec_bs_d} of this paper, we obtain such an improvement for all $f$ (with a constant $\sim 0.82$ instead of $0.6$):
\begin{theorem}\label{intro_improved_bs_deg}
For any Boolean function $f$, 
\be 
\emph{\bls}(f) \leq \sqrt{2/3}\cdot \deg(f)^2 + 1.
\ee
\end{theorem}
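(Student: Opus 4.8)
The plan is to run the Nisan--Szegedy reduction from block sensitivity to a one-variable polynomial problem, and then to extract a better constant from that problem via a sharpened Markov/Chebyshev-type estimate.

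First I would set $b=\bls(f)$ and $d=\deg(f)$, fix an input $z$ realizing the block sensitivity of $f$ together with disjoint sensitive blocks $B_1,\dots,B_b$, and form the function $g:\{0,1\}^b\to\{0,1\}$ defined by $g(y)=f\!\left(z\oplus\bigoplus_{i:\,y_i=1}\mathbf 1_{B_i}\right)$. Since $g$ is obtained from $f$ by an affine substitution of variables followed by restriction, $\deg(g)\le\deg(f)=d$; and after replacing $g$ by $1-g$ if necessary we may assume $g(0^b)=0$ and $g(e_i)=1$ for every $i$ (this is exactly what it means for each $B_i$ to be sensitive at $z$). Averaging $g$ over the levels of the cube (Minsky--Papert symmetrization) then produces a real univariate polynomial $q$ of degree at most $d$ with $q(0)=0$, $q(1)=1$, and $0\le q(t)\le 1$ for every integer $0\le t\le b$.

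It remains to prove the purely analytic statement that any such $q$ satisfies $b\le\sqrt{2/3}\,d^2+1$. By the mean value theorem $q'(\xi)=q(1)-q(0)=1$ for some $\xi\in(0,1)$, so $q$ has a derivative of size at least $1$ very close to the left endpoint of $[0,b]$, while $q$ itself is confined to the band $[0,1]$ on the whole integer grid $\{0,1,\dots,b\}$. The classical Ehlich--Zeller / Rivlin--Cheney inequality for polynomials bounded on an integer grid already yields a bound of the shape $b\le c\,d^2$; the improvement comes from exploiting two extra pieces of structure that the generic inequality ignores: the unit increment of $q$ occurs on the single lattice interval $[0,1]$ at the very edge of the grid, and at the two endpoints of that interval $q$ attains exactly the bottom and the top of the admissible band. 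Concretely I would restrict attention to the grid $\{1,\dots,b\}$, on which $q$ is merely band-limited, run the Ehlich--Zeller argument there, and then feed the two pinning conditions $q(0)=0$, $q(1)=1$ back in; the extremal configuration should be a normalized Chebyshev polynomial on $[1,b]$ modified by a low-order correction forcing $q(0)=0$, and optimizing over this family is what produces the constant $\sqrt{2/3}$, the additive $1$ accounting for the lattice point $t=0$ that was set aside.

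The main obstacle is precisely this last optimization, and in particular the discrepancy between ``$q$ lies in $[0,1]$ on the integer grid'' and ``$q$ lies in $[0,1]$ on the whole interval $[0,b]$.'' Inflating the former to the latter through a Coppersmith--Rivlin-type growth estimate costs an exponential factor that would swamp the gain and make the resulting bound worse than the known one, so the grid has to be handled directly in the style of Ehlich and Zeller while the two interpolation constraints are carried along. Pinning down the true extremal polynomial --- and thereby certifying that the constant is exactly $\sqrt{2/3}$ and not merely some improvement over the previously known value --- is where the actual work lies; everything preceding it is routine bookkeeping.
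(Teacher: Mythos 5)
Your sketch diverges from the paper's proof at precisely the point you flag as the obstacle, and the paper's two key ideas — neither of which appears in your proposal — are what dissolve that obstacle.

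First, the paper does \emph{not} work with the Minsky--Papert symmetrization $q(t)$, which is indeed only controlled on the integer grid $\{0,1,\dots,b\}$ and would therefore force you into an Ehlich--Zeller or Coppersmith--Rivlin argument. Instead it works with the \emph{bias polynomial} $p_f(\mu) := f(\mu,\mu,\dots,\mu)$ obtained by substituting the same real $\mu$ into every coordinate of the multilinear expansion of the standard-form function $f$. This polynomial has the same degree bound but, crucially, it equals $\E_{x\sim\mathrm{Bern}(\mu)^b}[f(x)]$ and is therefore bounded in $[0,1]$ on the \emph{entire} interval $\mu \in [0,1]$, not just at lattice points. The grid-versus-interval discrepancy you worry about simply does not arise, and the classical continuous Markov brothers' inequality applies directly.

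Second, the paper does not use the first derivative $p'$ at all. Instead it observes (Lemma 3.17) that for $f$ in standard form, every quadratic coefficient $c_{ij}$ of the multilinear expansion lies in $\{-1,-2\}$: evaluating $f$ at the weight-two vector $e_{i,j}$ gives $1+1+c_{ij}\in\{0,1\}$. Summing these forces $|p_f''(0)| = |2\sum_{i<j} c_{ij}| \ge b(b-1)$. The constant $\sqrt{2/3}$ then drops out of V.\,A.~Markov's inequality for the \emph{second} derivative, $\sup_{[-1,1]}|q''| \le \frac{d^2(d^2-1)}{3}\sup_{[-1,1]}|q|$, after the affine change of variable to $[0,1]$: one gets $b(b-1) \le \frac{2}{3}d^2(d^2-1)$ and hence $b \le \sqrt{2/3}\,d^2+1$. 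A first-derivative Markov estimate, even a sharpened discrete one, has no $\tfrac13$ in it, so your route cannot produce this constant without a different mechanism; your closing speculation that the extremal problem on the grid ``is what produces $\sqrt{2/3}$'' is not substantiated and, given where the $\tfrac13$ actually comes from, seems unlikely to pan out. The gap in your proposal is therefore not just that the hardest step is left open, but that the two ideas needed to close it (bias polynomial on a continuous interval; quadratic coefficients plus second-order Markov) are absent.
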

Theorem \ref{intro_improved_bs_deg} has a number of consequences. First, it explains why standard tensorization techniques have been unable to produce a quadratic separation between degree and block sensitivity (the best-known example achieves $\bls(f) = \deg(f)^{\log_3(6)}$). It also improves the best-known relationships between many other pairs of complexity measures by the same constant factor, most notably Huang's recent bound on block sensitivity in terms of sensitivity \cite{Huang}. 

The proof of Theorem \ref{intro_improved_bs_deg} resembles the original two-step approach of Nisan and Szegedy: (1) symmetrize the Boolean function $f : \{0,1\}^n \to \{0, 1\}$ to obtain a univariate polynomial $p$ of the same degree, bounded on $[0,1]$, and (2) relate the derivative of $p$ to $\bls(f)$ and appeal to an inequality from approximation theory to bound $\|p'\|_{L^{\infty}[0,1]}$ in terms of $\|p\|_{L^{\infty}[0,1]}$ and $\deg(p)$. However, our approach differs in two important ways, each of which is critical for obtaining the tighter constant. First, we target a different polynomial $p$, obtained via the noise operator rather than by averaging over the symmetric group. The norm of this polynomial is tightly controlled on the whole interval, rather than merely on a discrete net of points like Nisan and Szegedy's symmetrization, which avoids incurring a non-sharp constant factor in this step. Second, we are able to relate the \emph{second derivative} of this polynomial to block sensitivity, which enables us to appeal to the Markov Brothers' inequality and pick up a factor which is less than 1. This step relies on $f$ taking Boolean values on \emph{three} slices of the Boolean hypercube, whereas the original proof only requires two. \\

\noindent \textbf{Organization:} We provide definitions of all the relevant complexity measures in Section \ref{sec_prelim}. Then in Section \ref{sec_variables}, we first give a high-level overview of our method for proving bounds on $n(f)$, and then define our generalized coordinate measures in \ref{sec_RRCM}. The rest of Section \ref{sec_variables} is devoted to proving those bounds for a variety of complexity measures. In Section \ref{sec_bs_d}, we prove Theorem \ref{intro_improved_bs_deg}. Finally in Section \ref{sec_future_directions}, we discuss some open problems related to our work.

\section{Preliminaries}\label{sec_prelim}

All functions $f$ in this paper will be assumed to be Boolean valued on $\{0,1\}^n$. We will refer to the input variables of such functions either by $x_i$ or simply by the index $i$, for each $i \in \{1, \dots, n\}  =: [n]$. We define $\Rel(f)$ to be the set of relevant variables/coordinates for $f$, namely those $i \in [n]$ for which there exists a pair of inputs $(x, x')$ such that $x_j = x'_j$ for all $j \neq i$ and $f(x) \neq f(x')$. We write $\delta_i(f)$ for the indicator function of whether $i \in \Rel(f)$. We also define $$n(f) := |\Rel(f)| = \sum_{i \in [n]}\delta_i(f)$$ to be the number of relevant variables for $f$. We briefly review the definitions of the various complexity measures mentioned in the introduction (see e.g. the survey \cite{BdW} for a more comprehensive treatment of these and other related measures.)

\subsection{Complexity measures}

For each $f$ there is a corresponding multilinear polynomial over $\{0,1\}^n$, which we call the multilinear polynomial expansion of $f$. The degree of this polynomial is the \textbf{degree} of $f$, denoted $\deg(f)$.\\

For any string $x \in \{0,1\}^n$ and a subset $S \subseteq [n]$, we let $x^S$ denote the string obtained by flipping the bits of $x$ belonging to $S$ and leaving the rest alone. If $S = \{i\}$, we simply write $x^i$ to denote $x$ with the $i$th bit flipped. If $f(x) \neq f(x^i)$, we say that $f$ is sensitive to $i$ at $x$. The sensitivity of $f$ at an input $x$, denoted $s_x(f)$, is the number of $i \in [n]$ for which $f$ is sensitive to $i$ at $x$. The maximum of $s_x(f)$ over all $x \in \{0,1\}^n$ is called the \textbf{sensitivity} of $f$ and is denoted $s(f)$. The 1-sensitivity (resp. 0-sensitivity) of $f$, denoted $s^1(f)$ (resp. $s^0(f)$), is the maximum of $s_x(f)$ over all inputs $x$ with $f(x) = 1$ (resp. 0). \\

The block sensitivity at the point $x$ of a Boolean function $f: \{0,1\}^n \to \{0,1\}$, denoted $\bls_x(f)$, is the maximum number $k$ such that there exist $k$ disjoint sets $B_1, \dots, B_{k} \subseteq [n]$ (called blocks) with the property that
$$f(x) \ne f(x^{B_i}), \, \, \text{for } i = 1, \dots, k.$$
We then define the \textbf{block sensitivity} of $f$ to be the maximum value of $\bls_x(f)$ over all $x \in \{0,1\}^n$, and we denote it by $\bls(f)$. \\

The certificate complexity at the point $x$ of a Boolean function $f$, denoted $C_x(f)$, is the size of the smallest set $S \subseteq [n]$ with the property that $f$ is constant on the subcube of points which agree with $x$ on $S$, i.e. $\{y: y_i = x_i \text{ for all } i \in S\}$. The \textbf{certificate complexity} of $f$, denoted $C(f)$, is then defined as the maximum value of $C_x(f)$ over all $x \in \{0,1\}^n$. Also, let $C_{\min}(f) := \min_{x \in \{0,1\}^n} C_x(f)$. By analogy with $s^0(f)$ and $s^1(f)$, we can also define $C^0(f)$, $C^1(f)$, $C_{\min}^0(f)$ and $C_{\min}^1(f)$ in the obvious way.\\

It is easy to show (see \cite{Nisan}) that $s(f) \leq \bls(f) \leq C(f)$ always, and that for any \textit{monotone} Boolean function, the three measures actually coincide:
\be \label{s=C} s(f) = \bls(f) = C(f). \ee

The \textbf{decision tree depth} of $f$, denoted $\DT(f)$, is defined to be the minimum cost of any deterministic, adaptive query algorithm which always computes $f$ correctly. (The cost of such an algorithm is defined to be the maximal number of queries used by the algorithm to compute $f(x)$, taken over all $x \in \{0,1\}^n$.) \\

The \textbf{$\varepsilon$-approximate degree} of a Boolean function $f: \{0,1\}^n \to \{0,1\}$ is the smallest $d$ for which there exists a degree $d$ (multilinear) polynomial $p(x_1, \dots, x_n)$ such that 
$$|p(x) - f(x)| \leq \varepsilon\, \, \, \text{ for all } x \in\{0,1\}^n,$$
and we denote this quantity by $\widetilde{\deg}_\varepsilon(f).$ If we omit the $\varepsilon$ and simply write $\adeg(f)$, it should be understood to mean $\widetilde{\deg}_{1/3}(f).$ This is the canonical and somewhat arbitrary choice -- replacing $1/3$ by any other constant can only change the value of $\adeg(f)$ by a constant factor.

\subsection{Fourier influence}

Any function $f : \{0,1\}^n \to \{0,1\}$ can also be viewed as a function from $\{\pm 1\}^n \to \{\pm 1\}$ via the obvious affine transformation, and then expressed as a linear combination $f(x) = \sum_{S \subseteq [n]} \hat{f}(S) x^S$ of monomials $x^S = \prod_{i \in S} x_i$. The coefficients $\hat{f}(S)$ are called the Fourier coefficients of $f$. For each coordinate $i$, we define the \emph{$i$th coordinate influence} of a function $f$, denoted $\Inf_i[f]$, as
\be\nn \Inf_i[f] := \Pr_{x \sim \{\pm 1\}^n}[f(x) \neq f(x^i)] \ee and the \emph{total influence} of $f$, denoted by $\I[f]$, is defined to be $\sum_{i = 1}^n \Inf_i[f]$. We'll need the following well-known Fourier formulas for influence:
$$\Inf_i[f] = \sum_{S \ni i} \hat{f}(S)^2, \, \, \, \, \I[f] = \sum_{S \subseteq [n]} |S|\hat{f}(S)^2$$
as well as the following well-known fact about influence and restrictions:\footnote{We use the notation $f_{\alpha}$ for $\alpha \in \{0,1\}^H$ to denote the function obtained from $f$ by restricting the coordinates in $H$ according to the partial assignment $\alpha$.}
\begin{fact}\label{inf_avg} For any $i \in [n]$, and any set $H \subset [n]$ with $i \not \in H$,
$$\emph{\Inf}_i[f] = \E_{\alpha \sim \{0,1\}^H}[\emph{\Inf}_i[f_{\alpha}]].$$
\end{fact}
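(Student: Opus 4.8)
The plan is to prove this straight from the probabilistic definition $\Inf_i[f] = \Pr_{x \sim \{\pm 1\}^n}[f(x) \neq f(x^i)]$ by splitting the uniform draw of $x$ according to its restriction to $H$, i.e. by the law of total probability. First I would write $x = (\alpha, y)$, where $\alpha \in \{0,1\}^H$ records the bits of $x$ lying in $H$ and $y \in \{0,1\}^{[n] \setminus H}$ records the rest; since $x$ is uniform on $\{0,1\}^n$, the pair $(\alpha, y)$ is uniform on $\{0,1\}^H \times \{0,1\}^{[n] \setminus H}$, so $\alpha$ and $y$ are independent and each uniform.

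The one point that actually uses the hypothesis $i \notin H$ is the bookkeeping observation that flipping the $i$th bit of $x$ changes only $y$ and not $\alpha$: under the decomposition above, $x^i = (\alpha, y^i)$. Consequently the event $\{f(x) \neq f(x^i)\}$ is, by the very definition of the restricted function $f_\alpha$, the same as the event $\{f_\alpha(y) \neq f_\alpha(y^i)\}$. Conditioning on $\alpha$ and using independence then gives
$$\Inf_i[f] = \Pr_{\alpha, y}\big[f_\alpha(y) \neq f_\alpha(y^i)\big] = \E_{\alpha \sim \{0,1\}^H} \Pr_y\big[f_\alpha(y) \neq f_\alpha(y^i)\big] = \E_{\alpha \sim \{0,1\}^H}\big[\Inf_i[f_\alpha]\big],$$
where the last step is again the probabilistic definition of $\Inf_i$, now applied to $f_\alpha$ on the coordinate set $[n] \setminus H$, which contains $i$. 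This completes the argument, and there is no real obstacle: the only thing to be careful about is precisely that $i \notin H$ is what lets $x^i$ restrict to $y^i$.

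If a more Fourier-flavored proof is preferred, I would instead start from $\Inf_i[f] = \sum_{S \ni i} \hat f(S)^2$ together with the restriction identity $\widehat{f_\alpha}(T) = \sum_{S : S \cap ([n]\setminus H) = T} \hat f(S)\, \chi_{S \cap H}(\alpha)$ for $T \subseteq [n] \setminus H$. Taking $\E_\alpha$ of $\widehat{f_\alpha}(T)^2$, expanding the square, and using $\E_\alpha[\chi_U(\alpha)\chi_V(\alpha)] = \mathbf{1}[U = V]$ for $U, V \subseteq H$ collapses the double sum to $\sum_{S : S \cap ([n]\setminus H) = T} \hat f(S)^2$; summing over all $T \ni i$ and using $i \notin H$ to see that such $S$ range over exactly the sets containing $i$ recovers $\sum_{S \ni i} \hat f(S)^2 = \Inf_i[f]$, while the left side sums to $\E_\alpha[\Inf_i[f_\alpha]]$. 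I would present the probabilistic version, since it is shorter and makes the role of the hypothesis transparent.
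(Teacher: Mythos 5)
Your probabilistic argument is correct, and the paper itself states this fact without proof (as "well-known"), so there is nothing to compare against; your decomposition $x=(\alpha,y)$, the observation that $i\notin H$ gives $x^i=(\alpha,y^i)$, and the conditioning step constitute the standard and complete justification. The Fourier alternative you sketch is also valid, but the probabilistic version is indeed the cleaner choice here.
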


\section{Improved bounds on the number of variables}\label{sec_variables}

\subsection{Overview}\label{sec_overview}

Our goal is to develop a unified framework for proving bounds on $n(f)$ in terms of various complexity measures like $\deg(f)$, $s(f)$ and $C(f)$. A critical ingredient in each proof is a certain coordinate-centered analogue $m_i$ of each complexity measure $m$ (see Definition \ref{RRCM}), which is constructed in such a way that the corresponding potential function 
\be\label{potential}
\textbf{M}(f) := \sum_{i \in [n]} \frac{\delta_i(f)}{2^{m_i(f)}}.
\ee
is guaranteed to obey the inequality
\be\label{H_rest}
\textbf{M}(f) \leq \sum_{i \in H}\frac{\delta_i(f)}{2^{m_i(f)}} + \E_{\alpha \sim \{0,1\}^H}[\textbf{M}(f_\alpha)]
\ee
for any $H \subseteq [n]$. Inequality (\ref{H_rest}) enables us to bound $\textbf{M}(f)$ recursively by averaging over random restrictions, provided that we select the set of coordinates $H$ so that the restrictions $f_{\alpha}$ are guaranteed to have \textit{lower complexity}, in some sense. Upper bounds on $\textbf{M}(f)$ naturally yield exponential upper bounds on $n(f)$ in terms of $m(f)$. We make these definitions precise below in the next subsection, and each subsequent subsection describes a different implementation of the general strategy above, yielding new bounds.

\subsection{Restriction-reducing coordinate measures}\label{sec_RRCM}


\begin{defn}\label{RRCM}
We say a functional $m_i$ on the set of Boolean functions is a \emph{restriction reducing $i$-coordinate measure} if, for any $j \in [n]\setminus\{i\}$, and each $b \in \{0,1\}:$
\begin{itemize}
    \item[(1)] $m_i(f_{j=b}) \leq m_i(f)$
    \item[(2)] if $\delta_i(f) = 1$ and $\delta_i(f_{j=b}) = 0$, then $m_i(f_{j=1-b}) \leq m_i(f) - 1.$
\end{itemize}
\end{defn}

We denote by $\mathcal{R}_i$ the set of restriction reducing $i$-coordinate measures. We abuse notation sightly and write $\{m_i\} \in \mathcal{R}_i$ to denote that $m_i \in \mathcal{R}_i$ for each $i \in [n]$. In words, these properties mean that \emph{whenever fixing some value for a variable $x_j$ removes dependence on $x_i$, the restrictions must be smaller in $m_i$ measure than the original function.} This implies a kind of ``subharmonicity'' for $\delta_i(f)2^{-m_i(f)}$: 

\begin{fact}\label{restriction_single_fact}
Let $m_i \in \mathcal{R}_i$, and let $j \in [n] \setminus \{i\}$. Then
\be\label{restriction_single} \delta_i(f)2^{-m_i(f)} \leq \frac{\delta_i(f_{j=0})2^{-m_i(f_{j=0})} + \delta_i(f_{j=1})2^{-{m_i(f_{j=1})}}}{2}.\ee
\end{fact}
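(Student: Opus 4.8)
The plan is to prove the inequality by a short case analysis on $\delta_i(f)$ and on how many of the two restrictions $f_{j=0}, f_{j=1}$ still keep $i$ as a relevant variable. First I would dispose of the trivial case $\delta_i(f) = 0$: the left-hand side of (\ref{restriction_single}) then vanishes, while the right-hand side is a sum of nonnegative quantities, so the inequality holds.

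So assume $\delta_i(f) = 1$; the goal becomes $2^{-m_i(f)} \le \tfrac12\bigl(\delta_i(f_{j=0})2^{-m_i(f_{j=0})} + \delta_i(f_{j=1})2^{-m_i(f_{j=1})}\bigr)$. The key preliminary observation is that $\delta_i(f_{j=0})$ and $\delta_i(f_{j=1})$ cannot both be $0$: since $i \in R(f)$, there is a pair $(x,x')$ agreeing off coordinate $i$ with $f(x) \neq f(x')$, and if $b := x_j = x'_j$ denotes their common $j$-th bit, then that very pair witnesses $i \in R(f_{j=b})$, i.e. $\delta_i(f_{j=b}) = 1$.

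This leaves two cases. If $\delta_i(f_{j=0}) = \delta_i(f_{j=1}) = 1$, then property (1) of Definition \ref{RRCM} gives $m_i(f_{j=b}) \le m_i(f)$ for each $b \in \{0,1\}$, hence $2^{-m_i(f_{j=b})} \ge 2^{-m_i(f)}$, and averaging the two bounds yields exactly the claim. If instead exactly one of the two indicators is $0$, say $\delta_i(f_{j=b}) = 0$ and $\delta_i(f_{j=1-b}) = 1$, then property (2) applies and gives $m_i(f_{j=1-b}) \le m_i(f) - 1$, so $2^{-m_i(f_{j=1-b})} \ge 2\cdot 2^{-m_i(f)}$; since the term indexed by $b$ on the right-hand side is zero, the right-hand side equals $\tfrac12 \cdot 2^{-m_i(f_{j=1-b})} \ge 2^{-m_i(f)}$, as required.

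There is no real obstacle here: all the content has already been front-loaded into the choice of properties (1) and (2) in Definition \ref{RRCM}, and the proof is just the bookkeeping that checks the three regimes — both restrictions keep $i$ relevant, exactly one does, neither does — behave as intended, with the last regime ruled out by the witnessing-pair argument above. The only point requiring a moment's care is making the symmetric ``WLOG'' in the one-sided case explicit, since properties (1) and (2) are stated for a generic $j$ and $b$ and apply verbatim after relabeling.
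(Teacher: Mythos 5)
Your proof is correct and follows essentially the same route as the paper's: a case analysis on $\delta_i(f_{j=0}), \delta_i(f_{j=1})$, invoking property (1) when both are $1$ and property (2) when exactly one is $0$. The only difference is that you spell out the edge cases ($\delta_i(f)=0$ trivial; both restriction indicators cannot vanish when $\delta_i(f)=1$, via the witnessing-pair argument) that the paper leaves implicit.
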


\begin{proof}
If $\delta_i(f_{j=0}) = \delta_i(f_{j=1}) = 1$, then property (1) of Definition \ref{RRCM} implies that both $2^{-m_i(f_{j=0})} \geq 2^{-m_i(f)}$ and $2^{-m_i(f_{j=1})} \geq 2^{-m_i(f)}$, which implies (\ref{restriction_single}). Otherwise, suppose without loss of generality that $\delta_i(f_{j=0}) = 0$ and $\delta_i(f_{j=1}) = 1$. Then property (2) of Definition \ref{RRCM} implies that $2^{-m_i(f_{j=1})} \geq 2 \cdot 2^{-m_i(f)}$ which also implies (\ref{restriction_single}). 
\end{proof}

Fact \ref{restriction_single_fact} extends easily by induction to larger restrictions:

\begin{fact}\label{m_rest}
For any $i \in [n]$ and any $H \subset [n]$ with $i \not \in H$, and any $\{m_i\} \in \mathcal{R}_i$,
\be \delta_i(f)2^{-m_i(f)} \leq \E_{\alpha \sim \{0,1\}^H}\left[\delta_i(f_\alpha)2^{-m_i(f_\alpha)}\right].\ee
\end{fact}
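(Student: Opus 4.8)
The plan is to prove this by induction on $|H|$, using Fact~\ref{restriction_single_fact} as the engine. The base case $|H| = 0$ is the trivial equality $\delta_i(f)2^{-m_i(f)} = \delta_i(f)2^{-m_i(f)}$, and the case $|H| = 1$ is exactly Fact~\ref{restriction_single_fact} (after rewriting the right-hand side of (\ref{restriction_single}) as an expectation over $\alpha \sim \{0,1\}^{\{j\}}$).

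For the inductive step, I would write $H = H' \sqcup \{j\}$ where $j \notin H'$ and $i \notin H$ (so in particular $i \notin H'$ and $i \neq j$). Applying Fact~\ref{restriction_single_fact} with this $j$ gives
\be\nn
\delta_i(f)2^{-m_i(f)} \leq \frac{\delta_i(f_{j=0})2^{-m_i(f_{j=0})} + \delta_i(f_{j=1})2^{-m_i(f_{j=1})}}{2}.
\ee
Now the point is that $m_i$ is a functional defined on \emph{all} boolean functions and lies in $\mathcal{R}_i$ by hypothesis, so the induction hypothesis applies verbatim to each of the functions $f_{j=0}$ and $f_{j=1}$ with the smaller index set $H'$ (which still excludes $i$). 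This yields, for each $b \in \{0,1\}$,
\be\nn
\delta_i(f_{j=b})2^{-m_i(f_{j=b})} \leq \E_{\beta \sim \{0,1\}^{H'}}\!\left[\delta_i\!\left((f_{j=b})_\beta\right)2^{-m_i((f_{j=b})_\beta)}\right].
\ee
Substituting these two bounds into the displayed inequality and observing that $(f_{j=b})_\beta = f_\alpha$ where $\alpha \in \{0,1\}^H$ is the combined assignment setting $j \mapsto b$ and agreeing with $\beta$ on $H'$, the average of the two right-hand sides over $b$ uniform and $\beta \sim \{0,1\}^{H'}$ is precisely $\E_{\alpha \sim \{0,1\}^H}[\delta_i(f_\alpha)2^{-m_i(f_\alpha)}]$, completing the induction.

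I do not anticipate a genuine obstacle here: the only point requiring a moment's care is the bookkeeping that justifies invoking the induction hypothesis on $f_{j=b}$ — namely that membership in $\mathcal{R}_i$ is a property of the functional $m_i$ rather than of the particular function, so it transfers automatically to restrictions — and the identification of the two-stage average (over $b$, then over $\beta$) with a single uniform average over $\{0,1\}^H$. Both are routine, so the proof is short.
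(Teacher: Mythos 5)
Your proof is correct and follows essentially the same route as the paper: induction on $|H|$ with Fact~\ref{restriction_single_fact} as the base ingredient and the tower property of expectations to glue the stages together. The paper phrases the inductive step as a split $H = H_1 \sqcup H_2$ rather than always peeling off a single coordinate, but this is a cosmetic difference only.
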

\begin{proof}
We proceed by induction on $|H|$. The base case $H = \{j\}$ is Fact \ref{restriction_single_fact}. For the inductive step, observe that if $\delta_i(f)2^{-m_i(f)} \leq \E_{\alpha \sim \{0,1\}^H}\left[\delta_i(f_\alpha)2^{-m_i(f_\alpha)}\right]$ holds for all $f$ with $H = H_1$ or $H_2$, then it holds for $H = H_1 \sqcup H_2$, since
\bea\nn
\delta_i(f)2^{-m_i(f)} &\leq& \E_{\alpha_1 \sim \{0,1\}^{H_1}}\left[\delta_i(f_{\alpha_1})2^{-m_i(f_{\alpha_1})}\right] \\\nn
&\leq& \E_{\alpha_1 \sim \{0,1\}^{H_1}}\left[\E_{\alpha_2 \sim \{0,1\}^{H_2}}\left[\delta_i(f_{\alpha_1, \alpha_2})2^{-m_i(f_{\alpha_1, \alpha_2})}\right]\right] \\
\nn &=& \E_{\alpha \sim \{0,1\}^{H_1 \sqcup H_2}}\left[\delta_i(f_\alpha)2^{-m_i(f_\alpha)}\right].
\eea 
\end{proof}

For any $\{m_i\} \in \mathcal{R}_i$, we can define the associated potential function $\textbf{M}$ via equation (\ref{potential}). By Fact \ref{m_rest}, $\textbf{M}$ satisfies the inequality (\ref{H_rest}) for any set $H \subseteq [n]$ of restricted coordinates. Next we introduce three explicit families\footnote{The reader may wonder why we do not define $\bls_i(f)$, the block sensitivity version of $\sens_i(f)$. This is indeed an RRCM, however, our arguments are not able to exploit the distinction between counting coordinates versus counting \emph{blocks} of coordinates, so we were not able to prove anything stronger with $\bls_i$ than what we can already show with $\sens_i$.} of RRCMs, the first of which ($\deg_i$) was introduced in \cite{CHS}:

\begin{defn}\label{RRCMs}
For each $i \in [n]$, define the $i$-coordinate measures
\bea
\label{deg_i}\deg_i(f) &:=& \deg(f(x) - f(x^i))\\
\label{sens_i}\emph{\sens}_i(f) &:=& \max_{\{x \, : f(x) \neq f(x^i)\}} s_x(f) + s_{x^i}(f) \\
\label{cert_i}\emph{\cert}_i(f) &:=& \max_{\{x \, : f(x) \neq f(x^i)\}} C_x(f) + C_{x^i}(f)
\eea

\end{defn}

The idea behind the measures in Definition \ref{RRCMs} is to keep track of a relevant quantity (e.g. sensitive edges) on \emph{both sides of a given edge} in the Boolean hypercube, thereby allowing one to exploit a simple property of the two-dimensional facets of the cube.\footnote{Color the vertices of the hypercube according their value under $f$; then inside any square face which is not monochromatic, any monochromatic edge has a perpendicular bichromatic edge. This is at the heart of Lemma \ref{rest_reduce_lem}, although we do not use this language in the proof.} 

\begin{lem}\label{rest_reduce_lem}
For each $i \in [n]$, the coordinate measures $\deg_i$, $\emph{\sens}_i$, and $\emph{\cert}_i$ all belong to $\mathcal{R}_i$. 
\end{lem}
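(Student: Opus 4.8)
The plan is to verify properties (1) and (2) of Definition \ref{RRCM} for each of the three measures $\deg_i$, $\sens_i$, $\cert_i$ separately, since the arguments are structurally similar but use different facts about how each underlying measure ($\deg$, $s$, $C$) behaves under restriction. The common template is: fix $i$, fix $j \neq i$ and $b \in \{0,1\}$, and compare the $i$-coordinate measure of $f$ to that of $f_{j=b}$. For property (1), the point is that restricting a coordinate can only simplify things — a polynomial's degree does not increase under restriction, and a certificate/sensitive block for a restricted function can always be viewed as a certificate/sensitive set for the original function (possibly after adding $j$), which controls the measures in the right direction. For property (2), the extra hypothesis $\delta_i(f)=1$, $\delta_i(f_{j=b})=0$ is what lets us gain the additive $1$: it says the $i$-sensitivity of $f$ is ``concentrated'' on the $j = 1-b$ side, and crucially any witness $x$ that $f$ is sensitive to $i$ must have $x_j = 1-b$; then the single coordinate $j$ can be folded into whatever object (monomial, certificate, sensitive block) one is tracking, yielding the $-1$.

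For $\deg_i$: write $g(x) = f(x) - f(x^i)$, so $\deg_i(f) = \deg(g)$, and note $g$ does not depend on $x_i$ as a $\pm1$ function in the obvious sense (it is antisymmetric under flipping $i$). Property (1) is immediate since $g_{j=b}$ has degree at most $\deg(g)$. For property (2): $\delta_i(f_{j=b})=0$ means $g_{j=b}\equiv 0$, i.e. the polynomial $g$ is divisible by the linear factor corresponding to ``$x_j = 1-b$'' — concretely, writing $g$ in the multilinear expansion over $\{0,1\}$, every monomial of $g$ must contain the variable $x_j$ (in the $b=0$ case; symmetric otherwise). Hence $g = x_j \cdot h$ for a multilinear $h$ not involving $x_j$, and $g_{j=1-b} = \pm h$ up to the affine shift, which has degree $\deg(g) - 1$. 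This is the CHS argument and I would cite \cite{CHS} for it, or reproduce it in a line.

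For $\sens_i$ and $\cert_i$: property (1). Take a maximizing pair $(y, y^i)$ for $\sens_i(f_{j=b})$ or $\cert_i(f_{j=b})$; lift it to an input $x$ of $f$ by setting $x_j = b$ and agreeing with $y$ elsewhere. For sensitivity, $s_y(f_{j=b}) \le s_x(f)$ since every coordinate $k \neq j$ to which $f_{j=b}$ is sensitive at $y$ is also one to which $f$ is sensitive at $x$; same for $y^i$. For certificate complexity one argues the other direction about which function has the harder job: a certificate for $f$ at $x$, intersected with the coordinates $\neq j$, is a certificate for $f_{j=b}$ at $y$, so $C_y(f_{j=b}) \le C_x(f)$. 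Either way the max defining the restricted $\sens_i$/$\cert_i$ is over a subset of (lifted) points with not-larger values, giving (1). Property (2): now $\delta_i(f)=1$ and $\delta_i(f_{j=b})=0$, so $f_{j=b}$ is insensitive to $i$ everywhere, which forces: for every $x$ witnessing $f(x)\neq f(x^i)$ we must have $x_j = 1-b$. Take such a maximizing $x$ for $\sens_i(f)$ (resp. $\cert_i(f)$) and restrict to $f_{j=1-b}$ at the corresponding point $y$. For sensitivity: $f$ is sensitive to $j$ at $x$ — indeed flipping $j$ lands in the $f_{j=b}$ slice where $f$ is $i$-insensitive, and one checks this flip changes the value (this is the standard observation that if $i$ is relevant on one side of $j$ and not the other, then $j$ itself is sensitive at the relevant points) — so $s_x(f) \ge s_y(f_{j=1-b}) + 1$, and likewise at $x^i$; summing gives $\sens_i(f) \ge \sens_i(f_{j=1-b}) + 2 \ge \sens_i(f_{j=1-b}) + 1$. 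For certificates: any certificate of $f_{j=1-b}$ at $y$ together with the coordinate $j$ is a certificate of $f$ at $x$ — but we need the stronger claim that an \emph{optimal} certificate of $f$ at $x$ must include $j$, which follows because a certificate avoiding $j$ would certify the value on both $j$-slices, contradicting that $f$ is $i$-sensitive at $x$ (in the $1-b$ slice) but the value is pinned, combined with the value forced in the $b$-slice; hence $C_x(f) \ge C_y(f_{j=1-b}) + 1$, same at $x^i$, and summing gives the $+2 \ge +1$.

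\textbf{Main obstacle.} The delicate point is property (2) for $\cert_i$ (and to a lesser extent $\sens_i$): one must argue not merely that one \emph{can} build a certificate for $f$ of size $C_y(f_{j=1-b})+1$, but that \emph{every} minimal certificate for $f$ at the relevant witness is forced to spend a coordinate on $j$, so that $C_x(f)$ genuinely exceeds the restricted quantity by at least one. This requires carefully using the hypothesis ``$\delta_i(f_{j=b}) = 0$'' to pin down that all $i$-sensitive inputs of $f$ lie strictly on the $j = 1-b$ side, and then a small case analysis on whether a hypothetical $j$-free certificate could certify a constant value across both slices. I would isolate this as a short sublemma (``if $f$ is $i$-sensitive at $x$ but $f_{j=x_j^{\,\oplus}}$ is not $i$-sensitive anywhere, then $j$ belongs to every certificate of $f$ at $x$, and $f$ is $j$-sensitive at $x$'') and apply it uniformly to both the sensitivity and certificate cases. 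Everything else is routine monotonicity-of-measures-under-restriction bookkeeping.
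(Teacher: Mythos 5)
Your overall strategy is the same as the paper's (verify properties (1) and (2) separately for each of $\deg_i, \sens_i, \cert_i$; the $\deg_i$ case reproduces the CHS divisibility argument; property (1) is routine monotonicity), but the treatment of property (2) for $\sens_i$ and $\cert_i$ contains a genuine error. You repeatedly claim that $j$ is sensitive for $f$ at \emph{both} $x$ and $x^i$ (and hence that $j$ belongs to every certificate at \emph{both} points), giving a ``$+2$'' which you then weaken to ``$+1$.'' This is false, and the sublemma you propose to isolate in your ``main obstacle'' paragraph (``if $f$ is $i$-sensitive at $x$ but $f_{j=x_j^{\oplus}}$ is not $i$-sensitive anywhere, then $j$ belongs to every certificate of $f$ at $x$, and $f$ is $j$-sensitive at $x$'') is also false. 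A concrete counterexample: take $f(x_1, x_2, x_3) = x_3 \oplus (x_1 \wedge x_2)$ with $i = 1$, $j = 2$, and $x = (0,1,0)$. Then $f_{j=0} = x_3$ is insensitive to $x_1$, and $f$ is $1$-sensitive at $x$ (since $f(0,1,0)=0 \ne 1 = f(1,1,0)$), yet $j = 2$ is \emph{not} sensitive at $x$ (since $f(0,1,0) = 0 = f(0,0,0)$), and $\{1,3\}$ is a certificate for $f$ at $x$ that omits $j$. The point your argument misses is that a certificate for $f$ at $x$ can itself contain $i$, in which case restricting it gives no information about $f(x^i)$ and no contradiction arises.

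The correct observation --- and what the paper actually uses --- is that $j$ is sensitive for $f$ at \emph{exactly one} of $x$ or $x^i$, not both. Since $f_{j=b}$ is $i$-insensitive, we have $f(x^j) = f(x^{ij})$, and since $f(x) \ne f(x^i)$, this common value agrees with exactly one of $f(x), f(x^i)$; hence $j$ is sensitive at precisely the other one. That single extra sensitive coordinate gives the $+1$ in the $\sens_i$ case directly, and since a sensitive coordinate must appear in every certificate at the point where it is sensitive, it also gives the $+1$ in the $\cert_i$ case (while at the other point you only get the trivial $C_{x^i}(f) \ge C_{y^i}(f_{j=1-b})$, as in property (1)). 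So the needed conclusion is true and your general framing is sound, but the stated mechanism is wrong and needs to be replaced by the ``exactly one of $x, x^i$'' argument.
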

\begin{proof}
Since $\deg(\cdot)$, $s_x(\cdot)$ and $C_x(\cdot)$ cannot possibly increase by restricting input variables, property (1) of Definition \ref{RRCM} is trivially satisfied for each of the coordinate measures in question. To see that (2) holds, we abbreviate $f_{j=b}$ by $f_b$ and assume without loss of generality that $\delta_i(f_0) = 0$. 

First we argue that $\deg_i(f_1) = \deg_i(f) - 1$. We can write $f(x) = x_jf_1(x) + (1-x_j)f_0(x)$. Since $x_i$ does not appear in $(1-x_j)f_0(x)$, it follows that $f(x) - f(x^i) = x_j(f_1(x) - f_1(x^i))$ from which it is clear that $\deg_i(f)  = 1 + \deg_i(f_1)$. 

Next we argue $\sens_i(f_1) = \sens_i(f) - 1$. Let $x$ be any input for which $f(x) \neq f(x^i)$, and let us write $y$ for the string which is $x$ with the $j$th bit omitted. Since $f_0$ does not depend on $i$, it must be that $f_0(y^i) = f_0(y)$. Therefore all such $x$ must have $x_j = 1$, so $f_1(y) = f(x) \neq f(x^i) = f_1(y^i)$. But then $j$ must be sensitive for $f$ at exactly one of $x^i$ or $x$, hence $s_x(f) + s_{x^i}(f) = s_x(f_1) + s_{x^i}(f_1) + 1$.

Finally we argue $\cert_i(f_1) = \cert_i(f) - 1$, which essentially follows from the previous paragraph. Indeed, as above, all $x$ for which $i$ is sensitive for $f$ must have  $x_j = 1$, and $j$ must be sensitive for exactly one of $x$ or $x^i$ -- suppose it is $x$ (wlog). Then any certificate for $f$ which agrees with $x$ must assign 1 to $x_j$, since if it were allowed to be flipped, the certificate could not make $f$ constant. The claim follows. 
\end{proof}

\begin{lem}\label{inf_bound}
Let $m_i$ be a restriction reducing $i$-coordinate measure and set $r := \min\{m_i(\chi_i), m_i(\neg \chi_i)\}$, where $\chi_i(x) = x_i$. Then for any Boolean function $f$,
\be \label{ineq_i}\delta_i(f)2^{-m_i(f)} \leq 2^{-r} \cdot \emph{\Inf}_i[f]. \ee
Hence $\emph{\textbf{M}}(f) \leq 2^{-r} \cdot \emph{\I}[f]$ and for any $k \in \N$, at most $\emph{\I}[f] \cdot 2^{k-r}$ relevant variables can have $m_i(f) \leq k$.
\end{lem}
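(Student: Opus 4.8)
The plan is to prove the pointwise inequality (\ref{ineq_i}) first, and then derive the two consequences by summing over $i$ and by a counting argument. For (\ref{ineq_i}), the natural move is to apply Fact \ref{m_rest} with $H = [n] \setminus \{i\}$, the set of \emph{all} coordinates other than $i$. This gives
\be\nn
\delta_i(f) 2^{-m_i(f)} \;\leq\; \E_{\alpha \sim \{0,1\}^{[n]\setminus\{i\}}}\left[\delta_i(f_\alpha) 2^{-m_i(f_\alpha)}\right].
\ee
Now each $f_\alpha$ is a function of the single variable $x_i$, so there are only three possibilities: $f_\alpha$ is constant (then $\delta_i(f_\alpha) = 0$ and the term vanishes), or $f_\alpha(x_i) = x_i$, or $f_\alpha(x_i) = \neg x_i$. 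In the two non-constant cases $m_i(f_\alpha)$ equals $m_i(x \mapsto x_i)$ or $m_i(x \mapsto \neg x_i)$ respectively, both of which are $\geq r$ by definition of $r$; hence $2^{-m_i(f_\alpha)} \leq 2^{-r}$ whenever $\delta_i(f_\alpha) = 1$. Therefore the expectation is at most $2^{-r} \cdot \Pr_\alpha[\delta_i(f_\alpha) = 1]$. The final observation is that $\delta_i(f_\alpha) = 1$ precisely when $f_\alpha$ is non-constant in $x_i$, i.e. when $f(x) \neq f(x^i)$ for the (either) input $x$ extending $\alpha$; averaging this indicator over $\alpha \sim \{0,1\}^{[n]\setminus\{i\}}$ is exactly $\Inf_i[f]$ (equivalently, invoke Fact \ref{inf_avg} with $H = [n]\setminus\{i\}$, noting $\Inf_i[f_\alpha] = \delta_i(f_\alpha)$ for a one-variable function). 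This yields $\delta_i(f) 2^{-m_i(f)} \leq 2^{-r}\Inf_i[f]$.

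Summing (\ref{ineq_i}) over all $i \in [n]$ and using $\I[f] = \sum_i \Inf_i[f]$ gives $\textbf{M}(f) \leq 2^{-r}\I[f]$ directly. For the last claim, fix $k \in \N$ and let $T = \{i \in R(f) : m_i(f) \leq k\}$. Each $i \in T$ contributes $\delta_i(f) 2^{-m_i(f)} \geq 2^{-k}$ to $\textbf{M}(f)$, so $|T| \cdot 2^{-k} \leq \textbf{M}(f) \leq 2^{-r}\I[f]$, i.e. $|T| \leq \I[f] \cdot 2^{k-r}$, as desired.

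The only slightly delicate point is the identification of $\E_\alpha[\delta_i(f_\alpha)]$ with $\Inf_i[f]$, which is where one must be careful that $\delta_i$ (defined via the existence of a sensitive pair) coincides with $\Inf_i$ for a one-variable function and that the averaging in Fact \ref{inf_avg} is over the complementary coordinate set; everything else is a direct application of Fact \ref{m_rest} and elementary bookkeeping. I do not anticipate any real obstacle.
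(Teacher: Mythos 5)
Your proof is correct, and it takes a cleaner route than the paper's. The paper proves (\ref{ineq_i}) by a fresh induction on $n(f)$: it peels off one coordinate $j$ at a time via Fact \ref{restriction_single_fact} and invokes Fact \ref{inf_avg} at each step (implicitly one needs $j$ to be a relevant coordinate distinct from $i$ so that the restricted functions have strictly fewer relevant variables). You instead invoke Fact \ref{m_rest} once, with $H = [n]\setminus\{i\}$, which collapses all that bookkeeping: every restriction $f_\alpha$ is a one-variable function, so it is either constant (contributing nothing), or it is literally $x\mapsto x_i$ or $x\mapsto \neg x_i$, whence $m_i(f_\alpha) \geq r$ by the very definition of $r$. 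The identification $\E_\alpha[\delta_i(f_\alpha)] = \Inf_i[f]$ is then immediate. This avoids the re-induction, makes the appearance of $r$ transparent (it is genuinely the base-case value), and sidesteps the question of which coordinate to restrict. The two derived consequences (summing over $i$, and the Markov-style counting argument for the number of coordinates with $m_i(f)\leq k$) are handled the same way in both. No gaps.
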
 
\begin{proof} We proceed by induction on $n(f)$. If $n(f) = 1$, then the corollary follows from the definition of $r$ and the fact that $\Inf_i[f] = \delta_i(f)$. Now suppose the desired inequality holds for all $f'$ with $n(f') < n(f)$, and we wish to show it holds for $f$ as well. Then by the induction hypothesis and Fact \ref{restriction_single_fact},
\be \delta_i(f)2^{-m_i(f)} \leq \frac{2^{-r} \cdot \Inf_i[f_{j=0}] + 2^{-r} \cdot \Inf_i[f_{j=1}]}{2} = 2^{-r} \cdot \Inf_i[f]\ee
where the final equality is Fact \ref{inf_avg}. If we sum this inequality over $i \in \Rel(f)$, we obtain
\be
\textbf{M}(f) = \sum_{k = 0}^{\infty}\frac{|\{j \in \Rel(f) \, : \, m_j(f) = k\}|}{2^k} \leq 2^{-r}\I[f]
\ee
which in particular implies that at most $\I[f] \cdot 2^{k-r}$ variables in $\Rel(f)$ have $m_i(f) \leq k$.
\end{proof}

\begin{obs}\emph{
Applying Lemma \ref{inf_bound} to the measures $\deg_i$ and $\sens_i$ immediately yields both Nisan-Szegedy's and Simon's theorems\footnote{Note that $\I[f] \leq \deg(f)$ and $\I[f] \leq s(f)$.}. Indeed, $\min\{\deg_i(x \mapsto x_i), \deg_i(x \mapsto \neg x_i)\} = 1$ and $\min\{\sens_i(x \mapsto x_i), \sens_i(x \mapsto \neg x_i)\} = 2$, so \bea n(f) &\leq& \I[f]\cdot 2^{\deg(f)-1}  \\ \label{NS+Simon} n(f) &\leq& \I[f] \cdot 4^{s(f)-1}.\eea
}
\end{obs}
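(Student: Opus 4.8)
The plan is to read off both displayed inequalities as immediate corollaries of Lemma~\ref{inf_bound}, so the only real work is to pin down the constant $r$ for the two measures $\deg_i$ and $\sens_i$ and to check that every relevant coordinate of $f$ has a controlled value of $m_i(f)$. First I would invoke Lemma~\ref{rest_reduce_lem} to know that $\deg_i,\sens_i\in\mathcal{R}_i$, so that Lemma~\ref{inf_bound} applies verbatim. For $\deg_i$, the dictator $x\mapsto x_i$ satisfies $\deg_i(x\mapsto x_i)=\deg(x_i-(1-x_i))=1$, and likewise $\deg_i(x\mapsto\neg x_i)=\deg(1-2x_i)=1$, so $r=1$. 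For $\sens_i$, on either dictator every input is sensitive to $i$ and to no other coordinate, so $s_x\equiv 1$ and hence $\sens_i(x\mapsto x_i)=\sens_i(x\mapsto\neg x_i)=2$, giving $r=2$.

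Next I would record the elementary monotonicity bounds $\deg_i(f)\le\deg(f)$ and $\sens_i(f)\le 2s(f)$ valid for every $i\in[n]$: passing from the multilinear expansion of $f$ to that of $f(x)-f(x^i)$ (substituting $x_i\mapsto 1-x_i$) cannot raise the degree, and $s_x(f),s_{x^i}(f)\le s(f)$ by definition. Thus in the last clause of Lemma~\ref{inf_bound} I may take $k=\deg(f)$ (resp.\ $k=2s(f)$), and since \emph{all} $n(f)$ relevant variables then satisfy $m_i(f)\le k$, the bound ``at most $\I[f]\cdot 2^{k-r}$ relevant variables have $m_i(f)\le k$'' yields directly
\[
n(f)\le \I[f]\cdot 2^{\deg(f)-1}\qquad\text{and}\qquad n(f)\le \I[f]\cdot 2^{2s(f)-2}=\I[f]\cdot 4^{s(f)-1}.
\]
To recover the named theorems I would then plug in the two standard estimates $\I[f]\le\deg(f)$ (from $\I[f]=\sum_S|S|\hat f(S)^2\le\deg(f)\sum_S\hat f(S)^2=\deg(f)$ by Parseval) and $\I[f]=\E_x[s_x(f)]\le s(f)$, obtaining $n(f)\le\deg(f)\cdot 2^{\deg(f)-1}$, which is~(\ref{Nisan_Szeg}), and $n(f)\le s(f)\cdot 4^{s(f)-1}\le s(f)4^{s(f)}$, which is Simon's bound.

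I do not expect a genuine obstacle here, since the statement is a direct specialization of Lemma~\ref{inf_bound}. The only points demanding any care are the two base-case evaluations of $\deg_i$ and $\sens_i$ on dictator functions (which fix the constants $r=1$ and $r=2$, and hence the exact exponents) and the bookkeeping needed to rewrite $2^{2s(f)-2}$ as $4^{s(f)-1}$ and thereby line up with the literature's $s(f)4^{s(f)}$; everything else is just the trivial non-increase of $\deg$ and $s_x$ under restriction together with the two Fourier facts for $\I[f]$.
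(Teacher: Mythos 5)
Your proposal is correct and takes essentially the same route the paper does: it identifies $r=1$ for $\deg_i$ and $r=2$ for $\sens_i$, invokes the bounds $\deg_i(f)\le\deg(f)$ and $\sens_i(f)\le 2s(f)$ so that the last clause of Lemma~\ref{inf_bound} with $k=\deg(f)$ (resp.\ $k=2s(f)$) covers all relevant variables, and then uses $\I[f]\le\deg(f)$ and $\I[f]\le s(f)$ exactly as the paper's footnote indicates. You merely spell out the arithmetic the paper leaves implicit.
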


\subsection{Degree}

Let $\textbf{D}(f) := \sum_{i \in [n]}\frac{\delta_i(f)}{2^{\deg_i(f)}}$, and for any $H \subseteq [n]$, let $\textbf{D}(H, f) = \sum_{i \in H}\frac{\delta_i(f)}{2^{\deg_i(f)}}$. For any $d \in \N$, let $\textbf{D}_d = \max_{\{f \, : \, \deg(f) \leq d\}}\textbf{D}(f)$. In \cite{CHS}, the authors argue that one can always find a set $H$ of $\leq \deg(f)^3$ coordinates such that (i) $\deg_i(f) = \deg(f)$ $\forall i \in H$ and (ii) $\deg(f_{\alpha}) < \deg(f)$ for all $\alpha \in \{0,1\}^H$. This implies $\textbf{D}_d \leq \frac{d^3}{2^d} + \textbf{D}_{d-1}$, and hence that $\textbf{D}(f) < \sum_{d = 1}^{\infty}\frac{d^3}{2^d} = 26$ for all $f$. Combined with the observation that $\textbf{D}_d \leq \frac{d}{2}$ (see Lemma \ref{inf_bound}), this yields Chiarelli, Hatami and Saks' final bound $\textbf{D}(f) \leq \frac{11}{2} + \sum_{d = 12}^\infty \frac{d^3}{2^d} \approx 6.614$.

In this subsection, we implement their argument in a slightly different way to obtain a slightly stronger bound. In particular, rather than choosing $H$ to be a minimal set of coordinates which covers all max degree monomials in $f$, we choose $H$ to be the variables in a \textit{single monomial} of $f$. Restricting this set of coordinates may not reduce the degree of $f$, but as shown below, it \textit{will} reduce the block sensitivity of $f$. Hence, as we'll want to induct on both degree and block sensitivity simultaneously, we define
$$\textbf{D}_{b, d} : = \max_{\substack{f \text{ with }\bls(f) \leq b  \\ \text{ and } \deg(f) = d }} \textbf{D}(f).$$
We also define $$B_d := \max_{\{f : \deg(f) = d\}}\bls(f).$$ The following lemma is originally due to Nisan and Smolensky (in unpublished work, see \cite{BdW}).

\begin{lem}\label{bs decr}
If $M$ is a monomial of degree $d = \deg(f)$ which appears in $f$ with non-zero coefficient, then for any assignment $\alpha: M \to \{0,1\}$, the restricted function $f_\alpha$ has \emph{$\bls(f_\alpha) \leq \bls(f) - 1.$ }
\end{lem}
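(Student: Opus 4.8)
The plan is to prove the following pointwise strengthening, which yields the lemma at once. Write $S\subseteq[n]$ for the set of $d=\deg(f)$ variables occurring in $M$, so that $f_\alpha$ is a function of the coordinates in $[n]\setminus S$. I claim that for every assignment $\alpha$ to $S$ and every $y\in\{0,1\}^{[n]\setminus S}$,
$$\bls_{(y,\alpha)}(f)\;\geq\;\bls_y(f_\alpha)+1,$$
where $(y,\alpha)$ denotes the input agreeing with $y$ off $S$ and with $\alpha$ on $S$. Taking $y$ to be a point at which $\bls_y(f_\alpha)=\bls(f_\alpha)$ and using $\bls(f)\geq\bls_{(y,\alpha)}(f)$ then gives $\bls(f_\alpha)\leq\bls(f)-1$.

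For the pointwise claim, most of the work is free. If $B_1,\dots,B_k\subseteq[n]\setminus S$ are disjoint blocks witnessing $\bls_y(f_\alpha)=k$, then since none of them meets $S$ we get $f((y,\alpha)^{B_i})=f((y^{B_i},\alpha))=f_\alpha(y^{B_i})\neq f_\alpha(y)=f(y,\alpha)$, so each $B_i$ is still a sensitive block for $f$ at $(y,\alpha)$. It therefore suffices to produce one more nonempty block $B_0\subseteq S$ — automatically disjoint from $B_1,\dots,B_k$ — with $f((y,\alpha)^{B_0})\neq f(y,\alpha)$. Such a $B_0$ exists precisely when the restriction of $f$ obtained by fixing the coordinates outside $S$ to $y$ — call it $g$, a function on $\{0,1\}^S$ — is non-constant: any non-constant function has a sensitive block at every input, namely the symmetric difference to a point of the opposite value.

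The one substantive point, then, is that $g$ is non-constant for every $y$, and this is where the hypothesis on $M$ enters. Write the multilinear expansion $f=\sum_U c_U x^U$; by assumption $c_S\neq 0$ and $|S|=\deg(f)$. I will check that the coefficient of the monomial $x^S$ in $g$ equals $c_S$ for every $y$: substituting $x_j=y_j$ for $j\notin S$ and collecting the terms with $U\supseteq S$, that coefficient is $\sum_{W\subseteq[n]\setminus S}c_{S\cup W}\,y^W$, and every term with $W\neq\emptyset$ has $|S\cup W|=|S|+|W|>\deg(f)$, hence $c_{S\cup W}=0$. So the coefficient is just $c_S\neq 0$, i.e. $\deg(g)=|S|=d$. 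In particular $g$ is non-constant (we may assume $f$ is non-constant, for otherwise no monomial $M$ of degree $d\geq 1$ exists), which is exactly what was needed; assembling $B_0,B_1,\dots,B_k$ completes the proof.

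The main obstacle here is conceptual rather than computational. The tempting route — imitating the argument of \cite{CHS} by trying to show that restricting $M$ reduces $\deg(f)$ — simply fails: an OR of AND-terms that share variables has degree strictly larger than any single term, so fixing one top monomial need not lower the degree. The right move is to abandon the degree and track block sensitivity instead, and the crucial enabling observation is precisely the displayed coefficient computation above: a monomial of maximal degree is ``rigid'' under restrictions of its complementary variables, so $x^S$ survives in $g$, $g$ genuinely depends on all of $S$, and a genuinely non-constant function always furnishes one extra sensitive block. Everything else is bookkeeping, and the degenerate case $\deg(f)=0$ is vacuous.
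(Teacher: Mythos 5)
Your proof is correct and takes essentially the same route as the paper: both restrict the variables outside $M$ to $y$, observe that the resulting function on $\{0,1\}^M$ still has degree $d$ (since nothing of higher degree exists to cancel $x^M$) and hence is non-constant, and then use this to produce one additional sensitive block inside $M$ that is automatically disjoint from any blocks witnessing $\bls_y(f_\alpha)$. You spell out the coefficient computation and the pointwise inequality $\bls_{(y,\alpha)}(f)\geq\bls_y(f_\alpha)+1$ more explicitly than the paper does, but the underlying argument is the same.
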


\begin{proof}
Let us write any string $x \in \{0,1\}^n$ as $x = (x_M, y)$, where $x_M \in \{0,1\}^M$ and $y \in \{0,1\}^{[n]\setminus M}$. We claim that for any $(x_M, y)$, there is always a sensitive block for $f$ contained entirely in $M$. Indeed, for any $y$, the function $f(\cdot, y)$ has degree $d$, since nothing can cancel with the maximal monomial $\prod_{i \in M}x_i$. In particular, it is not constant, so for any input $x_M$, there is always at least one sensitive block for $f(\cdot, y)$ at $x_M$. Therefore, $\bls_{y}(f_{\alpha}) + 1 \leq \bls_{(\alpha, y)}(f)$, and the lemma follows.
\end{proof}

\begin{lem}\label{bd_recursive}
For each $b,d$ with $b \leq B_d$, we have
$$\emph{\textbf{D}}_{b, d} \leq d  \cdot2^{-d} + \max_{k \in \{1, \dots, d \}} \emph{\textbf{D}}_{b - 1, k} $$
\end{lem}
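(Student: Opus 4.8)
The plan is to pick a single maximal-degree monomial $M$ of $f$ (so $|M| = d = \deg(f)$) and split the potential $\textbf{D}(f)$ according to whether a coordinate lies in $M$ or not, then restrict the coordinates of $M$ and apply the recursive inequality (\ref{H_rest}) with $H = M$. Concretely, since $\deg_i(f) \leq \deg(f) = d$ for every relevant $i$, each of the $|M| = d$ terms coming from $i \in M$ contributes at most $2^{-\deg_i(f)} \leq 2^{-d}$... wait, that overcounts: I should be more careful and note that $\delta_i(f) 2^{-\deg_i(f)} \le 2^{-d}$ only when $\deg_i(f) = d$, which need not hold for $i \in M$. The clean bound is simply $\delta_i(f)2^{-\deg_i(f)} \le 2^{-1} \cdot \Inf_i[f]$ is too weak; instead use the crude $\sum_{i \in M} \delta_i(f)2^{-\deg_i(f)} \le |M| \cdot 2^{-1}$? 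No — the stated bound has $d \cdot 2^{-d}$, so the intended estimate is $\deg_i(f) = d$ for $i \in M$. Indeed this holds: if $i \in M$ then the monomial $M$ survives in $f(x) - f(x^i)$ as $\prod_{j \in M \setminus i} x_j$ (in the $\pm 1$ world, the two monomials $x^M$ and $x^{M\setminus i}$ have opposite... ) — more carefully, writing $f$ over $\{0,1\}$, flipping $x_i$ changes the monomial $\prod_{j \in M} x_j$ in a way that leaves a degree-$(d-1)$ term, and no other monomial of $f$ has degree $d$, so $\deg_i(f) \ge d - 1$; combined with property (1)-type reasoning and $\deg_i(f) \le \deg(f) = d$... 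Actually the cleanest route: $\deg_i(f) \le d$ always, and there are exactly $d$ coordinates in $M$, giving $\textbf{D}(M, f) = \sum_{i \in M} \delta_i(f) 2^{-\deg_i(f)}$, and since we only need an upper bound that matches $d \cdot 2^{-d}$ we do need $\deg_i(f) = d$ for each $i \in M$; I would establish this as a one-line sub-claim exactly as in the proof of Lemma \ref{rest_reduce_lem}.

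Next, for the coordinates outside $M$, I would apply inequality (\ref{H_rest}) (valid by Fact \ref{m_rest}, since $\{\deg_i\} \in \mathcal{R}_i$ by Lemma \ref{rest_reduce_lem}) with the restriction set $H = M$:
\[
\textbf{D}(f) \leq \textbf{D}(M, f) + \E_{\alpha \sim \{0,1\}^M}\big[\textbf{D}(f_\alpha)\big].
\]
The first term is at most $d \cdot 2^{-d}$ by the sub-claim above. For the second term, Lemma \ref{bs decr} gives $\bls(f_\alpha) \leq \bls(f) - 1 \leq b - 1$ for every $\alpha$, and trivially $\deg(f_\alpha) \leq d$, and in fact $\deg(f_\alpha) \in \{1, \dots, d\}$ unless $f_\alpha$ is constant (in which case $\textbf{D}(f_\alpha) = 0$ and contributes nothing). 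Hence each $\textbf{D}(f_\alpha) \leq \max_{k \in \{1,\dots,d\}} \textbf{D}_{b-1, k}$ by definition of $\textbf{D}_{b-1,k}$, so the expectation is bounded by the same maximum. Combining the two bounds yields exactly
\[
\textbf{D}_{b,d} = \max_{\substack{f:\ \bls(f) \leq b\\ \deg(f) = d}} \textbf{D}(f) \leq d \cdot 2^{-d} + \max_{k \in \{1, \dots, d\}} \textbf{D}_{b-1,k},
\]
as desired.

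The main obstacle, such as it is, is the sub-claim that $\deg_i(f) = d$ for every $i \in M$: one must verify that flipping $x_i$ for $i$ in a maximal-degree monomial $M$ leaves a polynomial of degree exactly $d - 1$ (not less, and obviously not more). This is where the choice of $M$ as a \emph{maximal-degree} monomial is essential — the degree-$(d-1)$ monomial $\prod_{j \in M \setminus \{i\}} x_j$ arising from the expansion of $f(x) - f(x^i)$ cannot be cancelled by any other contribution, since every monomial of $f$ other than those of degree $d$ contributes a term of degree $\leq d - 2$ to $f(x)-f(x^i)$ after restricting attention to terms divisible by the relevant structure — I would spell this out carefully by writing $f(x) - f(x^i)$ in terms of the discrete derivative $\partial_i f$, which is multilinear of degree $\deg_i(f)$ and whose top-degree part is precisely $\sum_{S \ni i,\ |S| = d} \hat{f}(S) \prod_{j \in S \setminus i} x_j$ (in the $\pm 1$ normalization), a nonzero polynomial because the Fourier support of $f$ contains $M \ni i$ with $|M| = d$. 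Everything else is bookkeeping: splitting the sum, invoking (\ref{H_rest}), and invoking Lemma \ref{bs decr}.
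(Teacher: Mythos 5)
Your proof is correct and follows essentially the same route as the paper: restrict along a single maximal-degree monomial $M$, split via~(\ref{H_rest}) with $H=M$, bound the local term by $d\cdot 2^{-d}$ using $\deg_i(f)=d$ for $i\in M$, and invoke Lemma~\ref{bs decr} to drop the block-sensitivity index. The only difference is that you spell out the sub-claim $\deg_i(f)=d$ for $i\in M$ (which the paper asserts implicitly via the underbraced equality), and your justification of it is sound.
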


\begin{proof}
Suppose $f$ has $\deg(f) =  d$ and $\bls(f) \leq b$. Let $M$ be any degree $d$ monomial in $f$. Using (\ref{H_rest}), 
\be \textbf{D}(f) \leq \underbrace{|M|\cdot 2^{-d}}_{= \, d \cdot 2^{-d}} + \underset{\alpha \sim \{0,1\}^M}{\E}[\textbf{D}(f_\alpha)].
\ee
By Lemma \ref{bs decr}, each $f_{\alpha}$ has $\bls(f_\alpha) \leq b - 1$. Since $\textbf{D}_{b, d}$ is monotone in $b$, it follows that for each $\alpha$, $\textbf{D}(f_\alpha) \leq \textbf{D}_{b - 1, k}$, where $k = \deg(f_{\alpha}).$ Taking the maximum over all values of $k \in \{1, \dots, d\}$ yields a uniform bound that holds for all restrictions $f_\alpha$.
\end{proof}

\begin{coro}\label{bd_cor}For every $f$, and every $d \geq 1$,
\bea\emph{\textbf{D}}(f) &\leq&   \left(\emph{\textbf{D}}_{B_d, d} + \frac{(d+1)B_{d+1}}{2^{d+1}} + \sum_{k = d+2}^{\infty} \frac{k(B_k - B_{k-1})}{2^k}\right) \\ &\leq& \left(\emph{\textbf{D}}_{B_d, d} + \frac{(d+1)^3}{2^{d+1}} + \sum_{k = d+2}^{\infty} \frac{2k^2-k}{2^k}\right).
\eea
\end{coro}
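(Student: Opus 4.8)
The plan is to iterate the recursion of Lemma~\ref{bd_recursive}, thinning $f$ one block-sensitivity level at a time. Starting from $\textbf{D}(f)\le\textbf{D}_{B_D,D}$ for $D=\deg f$, repeated application of Lemma~\ref{bd_recursive} expands $\textbf{D}_{B_D,D}$ into a sum of terms $k\,2^{-k}$, one per level of block sensitivity passed ($k$ being the degree surviving at that level), plus a residual. Two elementary facts run the accounting. First, $k\mapsto k2^{-k}$ is non-increasing on $k\ge 1$. Second, by the convention $\textbf{D}_{b,k}=0$ for $b>B_k$, at block-sensitivity level $b$ only degrees $k\ge\delta(b):=\min\{k:B_k\ge b\}$ contribute, so level $b$ contributes at most $\delta(b)\,2^{-\delta(b)}$. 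Since $\delta$ is non-decreasing with $\{b:\delta(b)=k\}=(B_{k-1},B_k]$ (taking $B_0:=0$), summing the contributions of all levels $b>B_d$ gives $\sum_{b>B_d}\delta(b)2^{-\delta(b)}=\sum_{k\ge d+1}k2^{-k}(B_k-B_{k-1})$.

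It then remains to bound the residual $\max_k\textbf{D}_{B_d,k}$ left after stopping at level $B_d$. I would bound it by $\textbf{D}_{B_d,d}$ up to a correction of at most $\tfrac{(d+1)B_d}{2^{d+1}}$, by one further pass of Lemma~\ref{bd_recursive} on the pieces of degree $>d$ (each such level costs $\le(d+1)2^{-(d+1)}$, and there are at most $B_d$ of them); splitting the tail sum at its $k=d+1$ term and using $B_{d+1}-B_d\le B_{d+1}$ then absorbs exactly this correction, yielding
\[
\textbf{D}(f)\ \le\ \textbf{D}_{B_d,d}+\frac{(d+1)B_{d+1}}{2^{d+1}}+\sum_{k\ge d+2}\frac{k(B_k-B_{k-1})}{2^k}.
\]
For the second displayed inequality I would substitute the Nisan--Szegedy bound $B_k\le k^2$: then $B_{d+1}\le(d+1)^2$ gives $\tfrac{(d+1)B_{d+1}}{2^{d+1}}\le\tfrac{(d+1)^3}{2^{d+1}}$, while $B_k-B_{k-1}\le k^2-(k-1)^2=2k-1$ gives $\tfrac{k(B_k-B_{k-1})}{2^k}\le\tfrac{2k^2-k}{2^k}$.

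The step I expect to be the main obstacle is controlling the residual $\max_k\textbf{D}_{B_d,k}$ by $\textbf{D}_{B_d,d}$: because a single top-monomial restriction (Lemma~\ref{bs decr}) can drop the degree by much more than one, the degree is not decremented in unit steps, so when the iteration crosses degree $d$ the surviving piece may have degree strictly below $d$, and bounding it by $\textbf{D}_{B_d,d}$ requires knowing that $\textbf{D}_{b,k}$ is monotone in $k$ as well as in $b$ over the feasible range --- equivalently, that among boolean functions of degree $\le d$ the potential $\textbf{D}$ is maximized by one of degree exactly $d$. (This same monotonicity is what is needed to handle the degenerate case $d>\deg f$.) Monotonicity in $b$ is immediate from set inclusion; if monotonicity in $k$ is awkward to establish directly, a lossy estimate suffices here, since the corollary is only ever applied in tandem with the bound $\textbf{D}_{B_d,d}\le d/2$ from Lemma~\ref{inf_bound}.
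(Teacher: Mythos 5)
Your reconstruction captures the argument the paper evidently intends (the corollary is stated without proof), and the accounting is right: iterating Lemma~\ref{bd_recursive} one block-sensitivity level at a time, using that $k\mapsto k2^{-k}$ is nonincreasing for $k\ge 1$ so that the cheapest feasible degree $\delta(b):=\min\{k:B_k\ge b\}$ dominates, yields the tail $\sum_{b>B_d}\delta(b)2^{-\delta(b)}=\sum_{k\ge d+1}k(B_k-B_{k-1})2^{-k}$, and your observation that the at most $B_d$ levels at or below $B_d$ where the surviving degree still exceeds $d$ each cost at most $(d+1)2^{-(d+1)}$ supplies exactly the extra $(d+1)B_d2^{-(d+1)}$ needed to turn $(B_{d+1}-B_d)$ into $B_{d+1}$ in the $k=d+1$ term. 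The passage to the second display via $B_k\le k^2$ and $B_k-B_{k-1}\le 2k-1$ is also correct.

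The monotonicity issue you flag is genuine, and the paper does not address it. Once the iteration reaches a pair $(b',k')$ with $k'\le d$ (and necessarily $b'\le B_{k'}$), one is left with $\textbf{D}_{b',k'}\le\textbf{D}_{B_{k'},k'}$, and concluding that this is at most $\textbf{D}_{B_d,d}$ requires that $k\mapsto\textbf{D}_{B_k,k}$ be nondecreasing, equivalently that the maximum of $\textbf{D}(g)$ over functions of degree at most $d$ is attained at degree exactly $d$. There is no obvious padding that promotes a degree-$k$ function to degree $d$ without also inflating the exponents $\deg_i$ (for instance $g=f\wedge(\text{AND of }d-k\text{ fresh variables})$ increases every $\deg_i(f)$ by $d-k$ and hence shrinks $\textbf{D}$). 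The clean repair is to state the corollary with $\max_{k\le d}\textbf{D}_{B_k,k}$ in place of $\textbf{D}_{B_d,d}$; your argument then proves it outright, and the numerics behind Theorem~\ref{deg_thm} are unaffected provided one reads off $\max_{k\le 30}D(B_k,k)$ from the table rather than the single entry $D(B_{30},30)$. Your proposed fallback of substituting $\textbf{D}_{B_d,d}\le d/2$ would not by itself rescue the stated form, since the whole purpose of the corollary is to replace $d/2$ by the much smaller recursively computed value.
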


Lemma \ref{bd_recursive} yields explicit bounds on $\textbf{D}_{b, d}$ for any finite $(b,d)$, which in turn yields an explicit bound on $\textbf{D}(f)$ for any $f$ via Corollary \ref{bd_cor}. Incorporating the influence bound $\textbf{D}_{b, d} \leq \frac{d}{2}$, we build up a table of upper bounds $D(b,d)$ recursively, using the rule
\be\label{dynamic}
D(b,d) = \min\left\{\frac{d}{2}, \, \max_{k \in \{1, \dots, d \}}\left\{ d\cdot 2^{-d} + D(b - 1, k) \right\} \right\}.
\ee
Supposing $B_d = d^2$ and extracting bounds recursively already shows that $\textbf{D}(f) < 5.0782$, but we can further improve this by obtaining sharper upper bounds on $B_d$. For values of $d \leq 14$ (which contribute the most to $D(b,d)$ anyway), we can obtain such bounds by manually checking feasibility of a certain linear program, as shown below. (This reduction is inspired by ideas of Nisan and Szegedy in \cite{NS}.)

\begin{fact}\label{bs_reduction}
If there exists a function $f : \{0,1\}^n \to \{0,1\}$ of degree $d$ with block sensitivity $b$, then there exists another function $g: \{0,1\}^b \to \{0,1\}$ of degree $\leq d$ with $g(0) = 0$ and $g(w) = 1$ for each vector $w$ of Hamming weight 1.  
\end{fact}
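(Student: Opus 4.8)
\textbf{Plan for proving Fact \ref{bs_reduction}.} The plan is to start with a function $f:\{0,1\}^n\to\{0,1\}$ of degree $d$ achieving block sensitivity $b$, say at the input $x$ with disjoint sensitive blocks $B_1,\dots,B_b$, and to massage $f$ into a function on exactly $b$ variables with the claimed normalization. First I would apply the standard trick of replacing $f$ by $f(x\oplus\cdot)$ so that the witnessing point becomes $0^n$; this is an affine change of variables, so it preserves degree, and now $f(0)$ is some fixed bit and $f(0^{B_i})=f(0)$ for every $i$ (taking $0^{B_i}$ to mean the indicator of $B_i$). By replacing $f$ with $1-f$ if necessary we may assume $f(0)=0$, which again preserves degree and leaves $f(\mathbf{1}_{B_i})=0$.

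The main step is to collapse the $n$ variables down to $b$ by identifying, for each block $B_i$, all the variables in $B_i$ with a single new variable $w_i$, and setting all variables outside $\bigcup_i B_i$ to $0$. Concretely, define $g:\{0,1\}^b\to\{0,1\}$ by $g(w_1,\dots,w_b) = f(z)$ where $z_j = w_i$ if $j\in B_i$ and $z_j=0$ otherwise. This is the composition of $f$ with an affine (indeed, coordinate-projection-and-duplication) map $\{0,1\}^b\to\{0,1\}^n$, so $\deg(g)\le\deg(f)=d$. Under this substitution, $w=0^b$ maps to $z=0^n$, giving $g(0)=f(0)=0$; and the weight-one vector $e_i\in\{0,1\}^b$ maps to $z=\mathbf{1}_{B_i}=0^{B_i}$, giving $g(e_i)=f(0^{B_i})=f(0)$ — wait, that gives $0$, not $1$, so I need to be careful about the sign convention.

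The fix is to recall that "sensitive block" means $f(x^{B_i})\ne f(x)$, i.e. after translating to $0^n$ we have $f(\mathbf{1}_{B_i})\ne f(0)$. So if $f(0)=0$ then $f(\mathbf{1}_{B_i})=1$ for all $i$; if instead it is cleaner to have arranged $f(0)=1$, then one replaces $g$ by $1-g$ at the end. Either way, after the affine substitution and at most one final complementation, we land on $g:\{0,1\}^b\to\{0,1\}$ with $\deg(g)\le d$, $g(0)=0$, and $g(e_i)=1$ for every $i$ with $|e_i|=1$. The only genuinely delicate point — and the step I would expect to need the most care — is verifying that degree really cannot go up under the "duplicate a variable across a block, zero out the rest" substitution; but this is immediate because substituting $x_j\mapsto w_i$ (or $x_j\mapsto 0$) into a multilinear polynomial and then re-expanding/re-multilinearizing can only decrease or preserve the total degree, since each monomial's degree is non-increasing under identifying variables or setting them to constants. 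There is no real obstacle here beyond bookkeeping the sign conventions consistently; the content of the fact is entirely in this degree-non-increasing substitution.
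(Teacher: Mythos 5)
Your proof is correct and takes essentially the same approach as the paper: translate so the block-sensitivity witness is $0^n$, complement if needed so $f(0)=0$, then collapse each block $B_i$ to a single variable (zeroing out variables outside the blocks) and note that this substitution cannot increase degree. The momentary sign confusion in your second paragraph is self-corrected; since each $B_i$ is a sensitive block at $0$, $g(e_i)=f(\mathbf{1}_{B_i})\ne f(0)=0$, giving $g(e_i)=1$ exactly as claimed.
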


\begin{proof}
If $f(x)$ attains maximal block sensitivity at $z$, then $f(x \oplus z)$ attains maximal block sensitivity at 0, so without loss of generality we may assume $z = 0$, and possibly replacing $f$ by $1-f$ we may also assume that $f(0) = 0$. If $B_1, \dots, B_b$ are sensitive blocks for $f$ at 0, then define $$g(y_1, \dots, y_b) = f(\underbrace{y_1, \dots, y_1}_{B_1}, \dots, \underbrace{y_b, \dots, y_b}_{B_b}, \underbrace{0, \dots, 0}_{\text{remaining coords}})$$
so that for each coordinate vector $e_i$, $g(e_i) = f(\textbf{1}_{B_i}) = f(0^{B_i}) = 1$.
\end{proof}

For any $d \geq 1$, define the moment map $m_d: \R \to \R^d$ by $m(t) = (t, t^2,\dots, t^d)$.

\begin{prop}\label{bs_LP}
If there exists a degree $d$ function $f: \{0,1\}^n \to \{0,1\}$ with block sensitivity $b$, then there exists $\tau \in \{0,1\}$ such that the following set of linear inequalities has a solution $p \in \R^d$:
\bea \nn
\langle p, m_d(1) \rangle &=& 1 \\ \label{LP}
0 \leq \langle p, m_d(k) \rangle  &\leq & 1 \,\, \text{ for each } k \in \{2, \dots, b-1\} \\ \nn
\langle p, m_d(b) \rangle &=& \tau
\eea
\end{prop}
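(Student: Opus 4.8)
The plan is to start from Fact~\ref{bs_reduction}, which hands us a function $g: \{0,1\}^b \to \{0,1\}$ of degree $\leq d$ with $g(0) = 0$ and $g(w) = 1$ for every weight-one vector $w$. The natural move is to symmetrize $g$. Define the symmetrization $g^{\mathrm{sym}}(y) := \E_{\sigma \in S_b}[g(\sigma y)]$, where $\sigma$ permutes the $b$ coordinates. Averaging a multilinear polynomial over coordinate permutations preserves degree (it can only drop), so $\deg(g^{\mathrm{sym}}) \leq d$, and since $g^{\mathrm{sym}}$ is a symmetric polynomial on the hypercube, the classical fact (Minsky--Papert style) is that there is a univariate polynomial $q$ of degree $\leq d$ with $g^{\mathrm{sym}}(y) = q(|y|)$ for all $y \in \{0,1\}^b$, where $|y|$ is the Hamming weight. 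Thus $q(k) \in [0,1]$ for every integer $k \in \{0,1,\dots,b\}$ because $g^{\mathrm{sym}}$ is an average of $\{0,1\}$-values; moreover $q(0) = g^{\mathrm{sym}}(0) = 0$ and $q(1) = g^{\mathrm{sym}}(e_1) = 1$ since all weight-one points map to $1$.

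Next I would convert this univariate polynomial into the claimed linear-inequality system. Write $q(t) = p_0 + p_1 t + p_2 t^2 + \cdots + p_d t^d$. The condition $q(0) = 0$ forces $p_0 = 0$, so $q(t) = \langle p, m_d(t)\rangle$ with $p = (p_1, \dots, p_d) \in \R^d$ exactly as in the statement. Then $q(1) = 1$ gives $\langle p, m_d(1)\rangle = 1$; the constraints $0 \leq q(k) \leq 1$ for $k = 2, \dots, b-1$ give the middle block of inequalities; and $q(b) = g^{\mathrm{sym}}(\mathbf{1}) \in \{$ its value at the all-ones point $\}$, which is an average of $\{0,1\}$ values and hence lies in $[0,1]$ — but to get the sharper endpoint condition $\langle p, m_d(b)\rangle = \tau \in \{0,1\}$ one needs $g$ itself to be $\{0,1\}$-valued at the all-ones point, i.e. one should symmetrize in a way that keeps the value at $\mathbf{0}$ and $\mathbf{1}$ honest. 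In fact $g^{\mathrm{sym}}(\mathbf 1) = g(\mathbf 1) \in \{0,1\}$ and $g^{\mathrm{sym}}(\mathbf 0) = g(\mathbf 0) = 0$ because these two points are fixed by every permutation, so $\tau := g(\mathbf 1)$ works and the system has the solution $p$.

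The step I expect to be the main obstacle — or at least the one needing the most care — is the passage from a symmetric multilinear polynomial on $\{0,1\}^b$ to a univariate polynomial in the Hamming weight of degree no larger than $d$. This is standard but worth stating cleanly: if $g^{\mathrm{sym}} = \sum_{S} \widehat{g^{\mathrm{sym}}}(S)\, y^S$ is symmetric, then $\widehat{g^{\mathrm{sym}}}(S)$ depends only on $|S|$, so grouping by size, $g^{\mathrm{sym}}(y) = \sum_{j=0}^{d} c_j \, e_j(y)$ where $e_j(y) = \sum_{|S|=j} y^S = \binom{|y|}{j}$ on the hypercube; each $\binom{|y|}{j}$ is a degree-$j$ polynomial in $|y|$, so collecting terms yields the desired $q$ of degree $\leq d$. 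Everything else — degree not increasing under symmetrization, the value-range and boundary conditions — is routine. I would also note that the existence of $\tau$ is genuinely a disjunction: depending on $f$ it could be either $0$ or $1$, which is exactly why the proposition asserts ``there exists $\tau \in \{0,1\}$'' rather than fixing its value.
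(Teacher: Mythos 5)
Your proof is correct and takes essentially the same route as the paper: symmetrize the function $g$ from Fact~\ref{bs_reduction} over $S_b$, invoke the Minsky--Papert reduction to a univariate polynomial of degree $\leq d$, note that $q(0)=0$ kills the constant term, and read off $q(1)=1$, $q(k)\in[0,1]$, and $q(b)=g(\mathbf 1)=\tau\in\{0,1\}$. The only difference is cosmetic — you spell out the standard symmetrization step that the paper cites as well known.
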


\begin{proof}
If such an $f$ exists, then let $q(x_1, \dots, x_b) = \frac{1}{b!}\sum_{\sigma \in S_b}g(x_{\sigma(1)}, \dots, x_{\sigma(b)})$, where $g$ comes from Fact \ref{bs_reduction}, and set $\tau = g(1, 1, \dots, 1)$. It is well known (see \cite{BdW}) that there is a univariate polynomial $p: \R \to \R$ of degree at most $d$ such that for any $x \in \{0,1\}^b$, $q(x_1, \dots, x_b) = p(x_1 + \dots + x_b)$. For each $k \in \{1, \dots, b\}$, $p(k)$ is therefore the average value of $g$ on Boolean vectors with Hamming weight $k$, so in particular $p(k) \in [0,1]$. We also know $p(0) = g(0) = 0$, $p(b) = g(1,\dots, 1) = \tau$, and $p(1) = \frac{1}{n}\sum_i g(e_i) = 1$, and hence the coefficients of $p$ provide a solution to the set of linear inequalities.  \end{proof}

Using the simplex method with exact (rational) arithmetic in Maple, we compute the largest $b$ for which the LP (\ref{LP}) is feasible for $1 \leq d \leq 14$, which yields upper bounds on $B_d$ for small $d$. These bounds are summarized in Table \ref{bs table}. Recomputing the table $D(b,d)$ with $B_d$ given by Table \ref{bs table} for $d \leq 14$ (and $B_d = d^2$ for $d > 14$), we can recompute the table as in (\ref{dynamic}) with these boundary conditions. This time $D(30^2, 30) \leq 4.4157\dots$, which implies 
\be
\textbf{D}(f) \leq 4.4158
\ee
for all $f$. If we incorporate the main result of Section \ref{sec_bs_d}, which implies that
$$B_d^2 - B_d \leq \frac{2}{3}(d^4 - d^2)$$
into the table $D(b,d)$, we obtain the slightly stronger result $\textbf{D}_{\infty} \leq 4.3935$, which implies
\begin{theorem}\label{deg_thm}
For all $f$, $n(f) \leq 4.3935 \cdot 2^{\deg(f)}$.
\end{theorem}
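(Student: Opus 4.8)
The statement $n(f) \leq 4.3935 \cdot 2^{\deg(f)}$ follows immediately from the claim $\textbf{D}(f) \leq 4.3935$ for every $f$, because by definition
\be\nn
n(f) = \sum_{i \in R(f)} 1 = \sum_{i \in R(f)} 2^{\deg_i(f)} \cdot 2^{-\deg_i(f)} \leq 2^{\deg(f)} \sum_{i \in [n]} \frac{\delta_i(f)}{2^{\deg_i(f)}} = 2^{\deg(f)}\,\textbf{D}(f),
\ee
using that $\deg_i(f) \leq \deg(f)$ for every relevant $i$. So the whole task is to prove the uniform bound on the potential $\textbf{D}$, and the plan is to do this by the dynamic-programming recursion already set up in this subsection.

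First I would recall from Lemma \ref{rest_reduce_lem} that each $\deg_i$ lies in $\mathcal{R}_i$, so inequality (\ref{H_rest}) applies to $\textbf{D}$ for any set $H$. Combining this with Lemma \ref{bs decr} (restricting the variables of a single top-degree monomial $M$ drops block sensitivity by at least one) gives the two-parameter recursion of Lemma \ref{bd_recursive}, namely $\textbf{D}_{b,d} \leq d\,2^{-d} + \max_{1 \le k \le d}\textbf{D}_{b-1,k}$, valid for $b \le B_d$; together with the influence bound $\textbf{D}_{b,d} \le d/2$ from Lemma \ref{inf_bound} (since $\min\{\deg_i(x\mapsto x_i),\deg_i(x\mapsto\neg x_i)\}=1$ and $\I[f]\le\deg(f)$), this is exactly the update rule (\ref{dynamic}) for the table $D(b,d)$ of certified upper bounds. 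Corollary \ref{bd_cor} then lets one pass from finite table entries to a bound on $\textbf{D}(f)$ for arbitrary $f$ by telescoping the contribution of degrees above any chosen cutoff $d$, using $\deg_i(f) = \deg(f)$ for all relevant $i$ forces... more precisely it bounds $\textbf{D}(f)$ by $D(B_d,d)$ plus a convergent tail $\sum_{k > d} k(B_k - B_{k-1}) 2^{-k}$.

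The quantitative heart of the argument is feeding in good upper bounds on $B_d = \max_{\deg(f)=d}\bls(f)$. The crude bound $B_d \le d^2$ already yields $\textbf{D}(f) < 5.0782$. To push below $4.3935$ I would (i) for $d \le 14$ replace $d^2$ by the sharper values in Table \ref{bs table}, obtained by running the linear program (\ref{LP}) of Proposition \ref{bs_LP} with exact rational arithmetic and recording the largest feasible $b$ for each $d$; and (ii) for all $d$ incorporate the global inequality $B_d^2 - B_d \le \tfrac23(d^4 - d^2)$ coming from Theorem \ref{intro_improved_bs_deg}, proved in Section \ref{sec_bs_d}. With these boundary conditions, recomputing the table via (\ref{dynamic}) up to, say, $(b,d) = (30^2, 30)$ gives $D(30^2,30) \le 4.3935$, and Corollary \ref{bd_cor} with $d = 30$ (the tail being negligibly small) promotes this to $\textbf{D}(f) \le 4.3935$ for every $f$, completing the proof.

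I expect the main obstacle to be not conceptual but the verification of step (i): one must check that the simplex computations in Maple are correct and that the resulting Table \ref{bs table} entries genuinely certify upper bounds on $B_d$ (the LP feasibility is a \emph{necessary} condition for a degree-$d$ function of block sensitivity $b$ to exist, via Facts \ref{bs_reduction}, so infeasibility of (\ref{LP}) for a given $(b,d)$ rules out that pair), and then confirm that the dynamic program, run with those boundary values and sufficiently many rows, actually converges to a number below $4.3935$ rather than merely below $5$. A secondary point to get right is the precise form of Corollary \ref{bd_cor}'s tail bound and the choice of cutoff $d$ so that the discarded tail $\sum_{k>d} k(B_k-B_{k-1})2^{-k}$ is small enough not to spoil the third and fourth decimal digits.
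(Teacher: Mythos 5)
Your proposal follows the paper's argument essentially exactly: reduce to bounding the potential $\textbf{D}(f)$, use Lemmas \ref{bs decr} and \ref{bd_recursive} together with the influence bound from Lemma \ref{inf_bound} to build the dynamic-programming table (\ref{dynamic}), pass to a bound on $\textbf{D}(f)$ via the tail estimate in Corollary \ref{bd_cor}, and sharpen the boundary values $B_d$ using both the LP bounds of Table \ref{bs table} (for $d \le 14$) and the global inequality $B_d^2 - B_d \le \tfrac23(d^4-d^2)$ from Theorem \ref{improved_bs_deg}. The only slight slip is a bookkeeping one: in the paper, $D(30^2,30) \le 4.4157\ldots$ is the result of the \emph{first} recomputation (Table \ref{bs table} plus $B_d = d^2$ for $d>14$), and the final improvement to $4.3935$ comes only after additionally feeding the Section~\ref{sec_bs_d} inequality into the table, but this does not affect the correctness of the approach.
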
 

\noindent \textbf{Remark:} The best possible bound that could come from this argument is $\textbf{D}_{\infty} < 3.96$ (if the best known lower bound $B_d \geq d^{\log_3(6)}$ is actually tight, see Section \ref{sec_bs_d}), still quite far from best known lower bound $\textbf{D}_{\infty} \geq 1.5$. Hence to improve significantly upon Theorem \ref{deg_thm} using similar arguments will require improving Lemma \ref{bd_recursive}, which is likely very loose.

\begin{table}\centering
\resizebox{0.8\textwidth}{!}{%
\begin{tabular}{|l|l|l|l|l|l|l|l|l|l|l|l|l|l|l|}
\hline 
$d$ &1 & 2 & 3 & 4 & 5 & 6 & 7 & 8 & 9 & 10 & 11 & 12 & 13 & 14 \\ \hline
$B_d \leq$ &1 & 3 & 6 & 10 & 15 & 21 & 29 & 38 & 47 & 58 & 71 & 84 & 99 & 114 \\ \hline

\end{tabular}%
}

\caption{LP bounds on block sensitivity for low degree functions.}
\label{bs table}
\end{table}

\subsection{Certificate complexity}

Now let us define the analogous quantities for certificate complexity. Recall that $\cert_i(f) = \max_{\{x \, : f(x) \neq f(x^i)\}} C_x(f) + C_{x^i}(f)$. Let $\textbf{C}(f) := \sum_{i \in [n]}\frac{\delta_i(f)}{2^{\cert_i(f)}}$, and for any $H \subseteq [n]$, let $\textbf{C}(H, f) = \sum_{i \in H}\frac{\delta_i(f)}{2^{\cert_i(f)}}$. For any $d \in \N$, we also define $\textbf{C}_d = \max_{\{f \, : \, \deg(f) \leq d\}}\textbf{C}(f)$. 

\begin{theorem}\label{C<1/2}
For any $d \geq 1$, $\emph{\textbf{C}}_d \leq \frac{1}{2}$.
\end{theorem}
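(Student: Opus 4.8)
The plan is to prove $\textbf{C}_d \leq 1/2$ by a clean induction on $d$, mirroring the structure of the degree argument but exploiting the fact that $\cert_i$ behaves well with respect to a single maximal-degree monomial restriction. First I would set up the base case $d = 1$: a degree-one boolean function is $\pm x_i$ or a constant, so $n(f) \leq 1$ and $\cert_i(f) = 1$ for the unique relevant variable (one bit certifies the value), giving $\textbf{C}(f) \leq 1/2$ exactly. For the inductive step, assume $\textbf{C}_{d-1} \leq 1/2$ and take any $f$ with $\deg(f) = d$. The idea is to pick a degree-$d$ monomial $M$ of $f$ and restrict the $d$ variables in $M$: by the argument in the proof of Lemma~\ref{bs decr}, every restriction $f_\alpha$ with $\alpha : M \to \{0,1\}$ has $\deg(f_\alpha) \leq d - 1$ (in fact the full monomial gets killed), so $\textbf{C}(f_\alpha) \leq \textbf{C}_{d-1} \leq 1/2$ by the inductive hypothesis. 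Using the RRCM inequality (\ref{H_rest}) with $H = M$, we get $\textbf{C}(f) \leq \textbf{C}(M, f) + \E_{\alpha}[\textbf{C}(f_\alpha)] \leq \textbf{C}(M,f) + 1/2$.

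This naive bound is off by the term $\textbf{C}(M, f) = \sum_{i \in M} \delta_i(f) 2^{-\cert_i(f)}$, so the real work is showing this term can be absorbed — i.e., showing that the $d$ coordinates of the chosen monomial contribute essentially nothing, or that one should instead amortize against the drop in a finer parameter. The honest approach is to run a two-parameter induction exactly as in the degree subsection: define $\textbf{C}_{b,d} = \max\{\textbf{C}(f) : \deg(f) = d, \, \bls(f) \leq b\}$ (with $\textbf{C}_{b,d} = 0$ for $b > B_d$), observe that restricting the monomial $M$ drops block sensitivity by at least one (Lemma~\ref{bs decr}), and derive the recursion $\textbf{C}_{b,d} \leq d \cdot 2^{-\cert_{\min}} + \max_{k \leq d}\textbf{C}_{b-1,k}$ where $\cert_{\min}$ is a lower bound on $\cert_i(f)$ for $i \in M$. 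Here I would use that $\cert_i(f) \geq 2$ always (any relevant $i$ has $C_x(f), C_{x^i}(f) \geq 1$ at a point where $f$ is sensitive to $i$), and more importantly, for coordinates lying in a maximal monomial one can argue $\cert_i(f)$ is large — plausibly growing with $d$ — which makes the $d \cdot 2^{-\cert_{\min}}$ contributions geometrically summable and bounded well below the slack available. Combined with the influence bound from Lemma~\ref{inf_bound} (with $r = 2$, giving $\textbf{C}_d \leq \I[f]/4 \leq d/4$ — actually one should double-check the constant, since $\min\{\cert_i(x\mapsto x_i), \cert_i(x \mapsto \neg x_i)\} = 2$), the small cases of the recursion are controlled directly.

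The main obstacle, I expect, is pinning down a good lower bound on $\cert_i(f)$ for coordinates $i$ in a maximal-degree monomial $M$ — without it, the $d\cdot 2^{-\cert_{\min}}$ term is only $d \cdot 2^{-2} = d/4$, which does not sum to anything small and the induction fails to close at $1/2$. One natural route: if $i \in M$ and $M$ is a degree-$d$ monomial, then at a point $x$ witnessing sensitivity of $f$ to $i$, any certificate must fix all the variables of $M \setminus \{i\}$ (since leaving any of them free would allow the maximal monomial to re-appear and break constancy of the restriction), which forces $C_x(f) + C_{x^i}(f) \geq $ something like $d$ or $d+1$ — hence $\cert_i(f) \geq d$ for $i \in M$, and then $\textbf{C}(M, f) \leq d \cdot 2^{-d}$, which is at most $1/2$ for all $d \geq 1$ but more usefully tends to $0$, so the recursion $\textbf{C}_d \leq d \cdot 2^{-d} + \textbf{C}_{d-1}$ telescopes to $\textbf{C}(f) \leq \sum_{d \geq 1} d\cdot 2^{-d} = 2$ — which is too weak. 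So the final ingredient must be the two-parameter refinement: since restricting $M$ also drops block sensitivity, and since $\bls(f) \leq d$ forces the recursion to terminate after at most $d$ levels, one gets $\textbf{C}_d \leq \sum_{\text{levels}} (\text{level contribution})$ with only $O(d)$ terms each of size $O(d 2^{-d})$, i.e. $O(d^2 2^{-d})$, which combined with $\textbf{C}_d \leq d/4$ for small $d$ and the fast decay for large $d$ yields $\textbf{C}_d \leq 1/2$; I would verify the crossover numerically as in the degree case, but the qualitative reason it works is that $\cert_i$ for monomial coordinates is forced to be $\Omega(d)$, making the ``boundary'' term genuinely negligible.
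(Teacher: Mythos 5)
Your approach---restricting a maximal-degree monomial and running a two-parameter induction on degree and block sensitivity---is not the route the paper takes, and as you wrote it it does not close. The paper instead restricts the variables $H$ of a \emph{globally minimal certificate} $C$ for $f$, which gives two things you cannot get from a monomial. First, because $C$ is a certificate, one of the $2^{|H|}$ restrictions $f_{\alpha^*}$ is \emph{constant}, so the expectation term tightens to $(1 - 2^{-|H|})\textbf{C}_{d-1}$ rather than the full $\textbf{C}_{d-1}$; that surviving factor of $2^{-|H|}$ is exactly the slack that makes the induction close at $1/2$. Second, global minimality of $C$ forces $C_x(f) \geq |H|$ at \emph{every} point $x$, hence $\cert_i(f) \geq 2|H|$ for every relevant $i$, so the boundary term is $\textbf{C}(H,f) \leq |H| \cdot 4^{-|H|}$. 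Since $|H|\cdot 2^{-|H|} \leq \tfrac{1}{2}$, the recursion becomes $\textbf{C}(f) \leq 2^{-|H|}\cdot\tfrac12 + (1-2^{-|H|})\textbf{C}_{d-1}$, a convex combination of $\tfrac12$ and $\textbf{C}_{d-1}$, and the base case $\textbf{C}_1 = \tfrac14$ finishes it.

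Your version has a concrete gap at the step you yourself flag. The claim that $\cert_i(f) \geq d$ for every $i$ in a maximal-degree monomial $M$ is not established: the heuristic ``any certificate at a sensitive point must fix all of $M\setminus\{i\}$, since otherwise the monomial re-appears'' overlooks that the top monomial can cancel against other terms once some variables of $M$ are set, so the restricted function need not have a surviving top monomial, and a certificate can in principle leave variables of $M$ free. Without that lower bound you only have $\cert_i \geq 2$, and even \emph{with} it the single-parameter recursion telescopes to $\sum_d d2^{-d}=2$, as you note. The two-parameter patch you sketch is not carried through: the recursion allows the inner max to sit at degree $d$ for up to $B_d \sim d^2$ levels, each contributing $d\cdot 2^{-d}$ at the \emph{original} $d$, and it is not at all clear this combines with the $\I[f]/4$ bound to land under $\tfrac12$---you would need to verify it numerically and still justify $\cert_i \geq d$. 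The missing idea is to restrict a minimal certificate rather than a monomial, because the certificate gives both a dead branch and, via global minimality, the uniform lower bound on $\cert_i$ that you were trying (unsuccessfully) to extract from the monomial structure.
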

\begin{proof}

Let $f$ be a Boolean function with $\deg(f) = d$. For any certificate $C$ for $f$, let $H$ be the set of variables fixed by $C$. It follows from  (\ref{H_rest}) that 
\be \label{C_recur}\textbf{C}(f) \leq \textbf{C}(H, f) + \E_{\alpha \sim \{0,1\}^H}[\textbf{C}(f_{\alpha})].\ee
Since $C$ is a certificate, we know $\deg(f_{\alpha}) \leq d-1$ for all $\alpha$, and $\deg(f_{\alpha^*}) = 0$ for \emph{some} $\alpha^* \in \{0,1\}^H$. So, $\textbf{C}(f_{\alpha^*}) = 0$, and we can improve (\ref{C_recur}) to
\be\textbf{C}(f) \leq \textbf{C}(H, f) + \left(1 - 2^{-|C|}\right)\textbf{C}_{d-1}.\ee
Now take $C$ to be the \emph{globally smallest certificate} for $f$, so that $C_i(f) \geq 2|H|$ for all $i \in \Rel(f)$, and in particular
\be \label{C_recur_improved}
\textbf{C}(f) \leq |H|\cdot 4^{-|H|} + \left(1 - 2^{-|H|}\right) \textbf{C}_{d-1}.
\ee
Since $c\cdot 2^{-c} \leq \frac{1}{2}$ for $c \geq 1$, inequality (\ref{C_recur_improved}) implies that $\textbf{C}_d \leq \alpha \cdot \frac{1}{2} + (1 - \alpha)\cdot \textbf{C}_{d-1}$ for some $\alpha \in [0,1]$. Therefore if $\textbf{C}_{d-1} \leq \frac{1}{2}$ for some $d$, then also $\textbf{C}_d \leq \frac{1}{2}$. The theorem then follows by induction on $d$, noting that $\textbf{C}_1 = \frac{1}{4} < \frac{1}{2}.$
\end{proof}

Since $\textbf{C}(\chi_i(x)) = \frac{1}{4}$, Theorem \ref{C<1/2} cannot be improved by more than a factor of 2. In any case, we have the following immediate corollary (which is tight up to a factor of $\Theta\left(\frac{1}{\sqrt{C(f)}}\right)$ for the monotone address function): 

\begin{theorem}\label{C_thm}
For any $f$, $n(f) \leq \frac{1}{2} \cdot 4^{C(f)}$.
\end{theorem}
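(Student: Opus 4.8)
\textbf{Proof plan for Theorem \ref{C_thm}.}

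The plan is to deduce this immediately from Theorem \ref{C<1/2} by combining the potential function bound with the ``trivial'' lower bound on the coordinate measure $\cert_i$ at the simplest relevant functions. First I would recall the definition of the potential $\textbf{C}(f) = \sum_{i \in [n]} \delta_i(f) 2^{-\cert_i(f)}$ and observe that, since $\cert_i$ is a restriction reducing $i$-coordinate measure (Lemma \ref{rest_reduce_lem}), the abstract machinery applies; but in fact for the final statement we only need the already-proved bound $\textbf{C}(f) \leq \textbf{C}_{\deg(f)} \leq \tfrac12$ from Theorem \ref{C<1/2}, which holds uniformly in $\deg(f)$, hence for every $f$ we simply have $\textbf{C}(f) \leq \tfrac12$.

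Next I would bound each summand from below. For any relevant variable $i \in R(f)$, pick an input $x$ with $f(x) \neq f(x^i)$; then both $C_x(f) \leq C(f)$ and $C_{x^i}(f) \leq C(f)$, so $\cert_i(f) = \max_{\{x:\, f(x)\neq f(x^i)\}} \big(C_x(f) + C_{x^i}(f)\big) \leq 2C(f)$. Therefore $\delta_i(f) 2^{-\cert_i(f)} \geq \delta_i(f) \, 2^{-2C(f)} = \delta_i(f)\, 4^{-C(f)}$. Summing over all $i$ gives
\be \nn
n(f)\cdot 4^{-C(f)} \;=\; \sum_{i \in [n]} \delta_i(f)\, 4^{-C(f)} \;\leq\; \sum_{i \in [n]} \frac{\delta_i(f)}{2^{\cert_i(f)}} \;=\; \textbf{C}(f) \;\leq\; \frac{1}{2},
\ee
and rearranging yields $n(f) \leq \tfrac12 \cdot 4^{C(f)}$, as claimed.

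I do not anticipate a real obstacle here: all the content is in Theorem \ref{C<1/2} (whose proof in turn rests on Lemma \ref{rest_reduce_lem} for $\cert_i$ and on choosing $H$ to be the globally smallest certificate, so that every relevant variable $i$ satisfies $\cert_i(f) \geq 2|H| = 2C_{\min}(f)$ — note this is used inside Theorem \ref{C<1/2} but not needed again here). The only thing to be careful about is the direction of the inequality $\cert_i(f) \leq 2C(f)$: it is the global certificate complexity $C(f)$, i.e. the \emph{maximum} of $C_x(f)$, that upper-bounds both terms, so the exponent $2^{-\cert_i(f)}$ is at least $4^{-C(f)}$ — which is exactly what makes the sum $\textbf{C}(f)$ an upper bound for $n(f)\cdot 4^{-C(f)}$ rather than the other way around. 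One could alternatively state and prove a slightly sharper bound $n(f) \leq \tfrac12 \cdot 2^{C^0(f)+C^1(f)}$ by the same argument, but the stated form suffices.
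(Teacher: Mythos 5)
Your proof is correct and matches the paper's intent exactly: the paper presents Theorem \ref{C_thm} as an ``immediate corollary'' of Theorem \ref{C<1/2}, and your argument supplies precisely the one-line derivation (that $\cert_i(f) \leq 2C(f)$ for relevant $i$, hence $n(f)\,4^{-C(f)} \leq \textbf{C}(f) \leq \tfrac12$) that the paper leaves implicit.
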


Finally, we use an implementation similar to the one above to give a proof of a stronger bound on $n(f)$ in terms of $\deg(f)$ for \textit{monotone} functions $f$.

\begin{theorem}\label{mon_deg}
For monotone functions $f$, $n(f) \leq 1.325 \cdot 2^{\deg(f)}$.
\end{theorem}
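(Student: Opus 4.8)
The plan is to rerun the degree‑potential argument of Theorem~\ref{deg_thm}, this time exploiting that for a monotone function a maximum‑degree monomial behaves extremely rigidly under restriction. As before it suffices to show $\textbf{D}(f)<1.325$ for every monotone $f$, since $n(f)\le 2^{\deg(f)}\textbf{D}(f)$; the gain is that monotonicity lets us replace the two‑dimensional induction behind Lemma~\ref{bd_recursive} by a clean one‑dimensional recursion.

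The heart of the matter is a monotone sharpening of Lemma~\ref{bs decr}: if $d=\deg(f)\ge 1$ and $M$ is any degree‑$d$ monomial occurring in the multilinear expansion $f=\sum_S c_S\prod_{i\in S}x_i$ over $\{0,1\}^n$ with $c_M\neq 0$, then for \emph{every} $\alpha\in\{0,1\}^M$ the restriction $f_\alpha$ is monotone with $\deg(f_\alpha)\le d-1$, and moreover $f_{M\to\mathbf{1}}\equiv 1$ and $f_{M\to\mathbf{0}}\equiv 0$. Here is how I would prove it. First, M\"obius inversion gives $c_M=\sum_{T\subseteq M}(-1)^{|M\setminus T|}f(\mathbf{1}_T)$, where $\mathbf{1}_T$ is the input supported exactly on $T$; if $f(\mathbf{1}_M)=0$ then monotonicity forces $f(\mathbf{1}_T)=0$ for all $T\subseteq M$ and hence $c_M=0$, a contradiction, so $f(\mathbf{1}_M)=1$ and monotonicity upgrades this to $f_{M\to\mathbf{1}}\equiv 1$. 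Applying the same observation to the monotone dual $g(x):=1-f(\overline{x})$ — which has the same degree as $f$ and, as a one‑line expansion of $f(\overline{x})$ shows, exactly the same degree‑$d$ monomials — yields $f_{M\to\mathbf{0}}\equiv 0$. Finally, for the degree statement: if some $f_\alpha$ had degree $d$ it would contain a monomial $\prod_{i\in B}x_i$ with $B\subseteq[n]\setminus M$, $|B|=d$, whose coefficient in $f_\alpha$ equals $c_B$ (every other monomial of $f$ contributing to it has degree $>d$ and so coefficient $0$); but the identical computation applied to $f_{M\to\mathbf{1}}$ shows its coefficient of $\prod_{i\in B}x_i$ is also $c_B$, and that function is constant, so $c_B=0$ — a contradiction.

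Granting this lemma, set $\textbf{D}^{\mathrm{mon}}_d:=\sup\{\textbf{D}(f): f\text{ monotone},\ \deg(f)\le d\}$, which is finite because $\textbf{D}(f)\le \I[f]/2\le s(f)/2$ by Lemma~\ref{inf_bound} (using $\deg_i(x\mapsto x_i)=\deg_i(x\mapsto\neg x_i)=1$), and $\textbf{D}^{\mathrm{mon}}_0=0$. Given monotone $f$ of degree $d\ge 1$, pick a degree‑$d$ monomial $M$; then $\deg_i(f)=d$ for each $i\in M$, so applying (\ref{H_rest}) with $H=M$ together with the lemma gives
\[
\textbf{D}(f)\ \le\ d\cdot 2^{-d}\ +\ \E_{\alpha\sim\{0,1\}^M}\big[\textbf{D}(f_\alpha)\big]\ \le\ d\cdot 2^{-d}\ +\ \big(1-2^{1-d}\big)\,\textbf{D}^{\mathrm{mon}}_{d-1},
\]
since the two extreme restrictions contribute $0$ and each of the remaining $2^d-2$ restrictions is monotone of degree at most $d-1$. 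Unrolling this recursion from $\textbf{D}^{\mathrm{mon}}_0=0$ expresses $\sup_d\textbf{D}^{\mathrm{mon}}_d$ as the absolutely convergent series $\sum_{k\ge 1}k\,2^{-k}\prod_{j>k}\big(1-2^{1-j}\big)$, and a short computation bounds this by $1.325$; hence $n(f)\le 2^{\deg(f)}\textbf{D}(f)<1.325\cdot 2^{\deg(f)}$.

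The main obstacle is the monotone monomial lemma; once it is in hand, the recursion and its numerical evaluation are routine. Within the lemma the two delicate points are (i) checking that the dual $g(x)=1-f(\overline{x})$ really does share the \emph{entire} collection of maximum‑degree monomials with $f$ — this is what lets the clean behaviour of $f_{M\to\mathbf{1}}$ be mirrored for $f_{M\to\mathbf{0}}$ — and (ii) verifying that restricting $M$ destroys \emph{all} top‑degree monomials of $f$, not merely that $f_{M\to\mathbf{1}}$ and $f_{M\to\mathbf{0}}$ collapse, since it is precisely the factor $1-2^{1-d}$ at every level that forces convergence to a constant below $1.325$ rather than to something of order $d$.
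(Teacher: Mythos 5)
Your proposal is correct, and it takes a genuinely different route from the paper's. The paper's proof of Theorem~\ref{mon_deg} restricts $H$ to be the variables of a \emph{minimal 0-certificate}: monotonicity makes $f$ restricted to $\overline{H}\equiv 1$ an $\OR$ on $H$, giving $\deg_i(f)\ge |H|$ for $i\in H$, and the authors combine two alternative recursive estimates (restricting all of $H$ versus restricting only the degree-$d$ coordinates of $H$) and seed the recursion with the exact values $\widetilde{\textbf{D}}_1 = \widetilde{\textbf{D}}_2 = \frac12$. Your proof instead restricts a single maximum-degree monomial $M$, exploiting a monotonicity-specific sharpening of Lemma~\ref{bs decr}: not only does restricting $M$ reduce block sensitivity, but $f_{M\to\mathbf{1}}\equiv 1$, $f_{M\to\mathbf{0}}\equiv 0$, and every other $f_\alpha$ is monotone of degree $\le d-1$. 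Your monotone monomial lemma is correct as stated — the M\"obius-inversion identity $c_M=\sum_{T\subseteq M}(-1)^{|M\setminus T|}f(\mathbf{1}_T)$ forces $f(\mathbf{1}_M)=1$, the monotone dual $g(x)=1-f(\overline{x})$ has degree-$d$ coefficients $(-1)^{d+1}c_S$ and so shares $f$'s top monomials exactly, and any surviving degree-$d$ monomial $B\subseteq[n]\setminus M$ of some $f_\alpha$ would have coefficient $c_B$, which vanishes because it equals the coefficient of $\prod_{i\in B}x_i$ in the constant $f_{M\to\mathbf{1}}$. This turns the two-dimensional dynamic program of Lemma~\ref{bd_recursive} into a clean one-dimensional recursion $a_d\le d\,2^{-d}+\bigl(1-2^{1-d}\bigr)a_{d-1}$, $a_0=0$, whose limit $\sum_{k\ge 1}k\,2^{-k}\prod_{j>k}(1-2^{1-j})\approx 1.304$ is in fact \emph{below} the paper's $1.325$. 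What the paper's route buys is a sharper base case (they use the exact $\widetilde{\textbf{D}}_2 = \frac12$, relying on the classification of degree-2 monotone functions); your route buys a cleaner structural lemma and a closed-form series for the limit, and you could improve your constant further by seeding at $a_2=\frac12$ as the paper does.
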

\begin{proof}
We let $\widetilde{\textbf{D}}_d$ denote the maximum value of $\textbf{D}(f)$ over all monotone functions of degree at most $d$. Given a monotone $f$ of degree $d$, let $H$ be the variables fixed by any minimal 0-certificate $C$. By monotonicity, $f(0_H, 1_{\overline{H}}) \equiv 0$, so by minimality of $H$, each $i \in H$ must be sensitive for $f$ at the input $(0_H, 1_{\overline{H}})$. Therefore restricting the variables in $\overline{H}$ to 1 yields an $\OR$ function on $H$, and hence each $i \in H$ has $\deg_i(f) \geq |H|$. If we restrict all of the variables in $H$ so that one of the restrictions is constant, we get the analogue of (\ref{C_recur_improved}):
\be
{\textbf{D}}(f) \leq |H| \cdot 2^{-|H|} + \left(1-2^{-|H|}\right)\widetilde{\textbf{D}}_{d-1}.
\ee
However, if we only restrict those variables $i$ in $H$ with $\deg_i(f) = d$, we obtain
\be
{\textbf{D}}(f) \leq |H|\cdot 2^{-d} + \widetilde{\textbf{D}}_{d-1}.
\ee
Combining these two inequalities yields
\be\label{combined_ineq}
\widetilde{\textbf{D}}_d \leq \max_{1\leq k \leq d}\left\{ \min\left(k \cdot 2^{-k} + (1-2^{-k})\widetilde{\textbf{D}}_{d-1}, \, k \cdot 2^{-d} + \widetilde{\textbf{D}}_{d-1}\right)\right\}. 
\ee
Note that $\widetilde{\textbf{D}}_1 = \widetilde{\textbf{D}}_2 = \frac{1}{2}$, since the only monotone functions of degree exactly two are $\AND_2$ and $\OR_2$. Starting with these values and using (\ref{combined_ineq}) to recursively compute bounds on $\widetilde{\textbf{D}}_{d}$, we find that $\widetilde{\textbf{D}}_{30} \leq 1.3243$, and hence ${\textbf{D}}(f) \leq 1.3243 + \sum_{d=31}^{\infty}\frac{d}{2^d} < 1.325$. 
\end{proof}

\noindent \textbf{Remark:} In \cite{CHS}, a function of degree $d$ with $1.5 \cdot 2^d - 2$ relevant variables is constructed. Therefore, Theorem \ref{mon_deg} implies that all monotone functions of a given degree have at least $11\%$ fewer variables than do certain general functions of the same degree. 

\subsection{Sensitivity}

Define $\textbf{S}(f) := \sum_{i \in [n]} \frac{\delta_i(f)}{2^{\sens_i(f)}}$ and $\textbf{S}(H, f) = \sum_{i \in H} \frac{\delta_i(f)}{2^{\sens_i(f)}}$ for any $H \subseteq [n]$. In light of the previous subsections, it seems natural to expect that one should be able to prove a bound $\textbf{S}(f) = O(1)$ for any $f$ using a similar inductive argument, thereby improving Simon's theorem (in the same sense that \cite{CHS} improved Nisan-Szegedy's bound.) However, choosing the right $H$ to restrict for $\textbf{S}$ is tricky business -- neither choice from the previous two subsections will work in general here. (Unfortunately, unlike their counterparts $s(f)$ and $\deg(f)$, the coordinate measures $\sens_i(f)$ and $\deg_i(f)$ are not polynomially related.\footnote{The function $f(x)=x_1 \lor (x_2 \land \cdots \land x_n)$ has $\deg_1(f)=n$ but $\sens_1(f) = 2$.}) Despite this obstruction, we believe such a bound does hold, so we leave it as a conjecture.

\begin{conj}\label{s_conj}
For any $f$, $n(f) \lesssim 4^{s(f)}$. More strongly, $\emph{\textbf{S}}(f) \lesssim 1$. 
\end{conj}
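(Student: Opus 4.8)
\textbf{Proof proposal for Conjecture \ref{s_conj}.}

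The plan is to mimic the degree and certificate-complexity arguments, but to choose the restricted set $H$ more cleverly than before. The natural candidate is the following: given $f$ with $s(f) = s$, pick an input $x^*$ and a coordinate $i$ with $s_{x^*}(f) + s_{(x^*)^i}(f) = \sens_i(f) = \sens(f) := \max_i \sens_i(f)$, and let $H$ be a maximal sensitive block structure at $x^*$, i.e. the union of $s_{x^*}(f)$ singleton-sensitive coordinates at $x^*$ together with, possibly, their companion coordinates at $(x^*)^i$. The hope is that for every $\alpha \in \{0,1\}^H$, either $\sens(f_\alpha) \leq \sens(f) - 1$, or at least the potential drops enough on average. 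First I would establish a lemma analogous to Lemma \ref{bs decr}: that restricting such an $H$ forces a strict decrease in the relevant ``sensitivity-like'' parameter governing the induction, so that one can set up a two-parameter recursion $\textbf{S}_{s, t}$ (inducting on $s(f)$ and on some auxiliary measure, perhaps $\bls(f)$ or $n$ itself) exactly as in Lemma \ref{bd_recursive} and Corollary \ref{bd_cor}.

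Next I would run the dynamic-programming bookkeeping. Define $\textbf{S}_s := \max_{s(f) \leq s} \textbf{S}(f)$ (and a refined two-parameter version if needed), and derive a recursion of the shape $\textbf{S}_{s} \leq c\cdot s \cdot 4^{-s} + (\text{something})\cdot \textbf{S}_{s-1}$, where the leading term comes from bounding $\textbf{S}(H,f)$ using that each $i \in H$ has $\sens_i(f) \geq$ some linear function of $|H|$ (here I would use that a sensitive block structure of size $k$ at $x^*$ makes $s_{x^*}(f) \geq k$, hence $\sens_i(f) \geq k$ for the singleton-sensitive coordinates). Combined with the influence bound $\textbf{S}(f) \leq 2^{-2}\cdot \I[f] \leq \tfrac14 s(f)$ from Lemma \ref{inf_bound}, one feeds the two bounds into a table $S(s,t)$ recursively, just as in (\ref{dynamic}), and checks numerically that $S(s,t)$ stays bounded as $s,t \to \infty$. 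If the geometric-type factor multiplying $\textbf{S}_{s-1}$ is strictly less than $1$ infinitely often (or the additive terms are summable), this yields $\textbf{S}(f) = O(1)$, hence $n(f) \lesssim 4^{s(f)}$.

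The main obstacle — and the reason this is stated as a conjecture rather than a theorem — is precisely the choice of $H$: unlike $\deg$ (where a single maximal monomial works) and $C$ (where a globally minimal certificate works), there is no obvious set of coordinates whose restriction is guaranteed to strictly decrease $s(f)$ while simultaneously (i) being small enough that the $\textbf{S}(H,f)$ term is controlled and (ii) being composed of coordinates with large $\sens_i(f)$. A maximal sensitive block structure at a point of maximal sensitivity need not drop $s(f)$ under every restriction $\alpha$ — some restriction may create a new, unrelated point of high sensitivity elsewhere in the cube. Resolving this likely requires either a cleverer potential function that averages over the choice of $x^*$, or a structural lemma showing that sensitivity cannot ``reappear'' after restricting a maximal block structure; I would first try to prove such a no-reappearance lemma, and failing that, fall back on establishing the weaker statement $\textbf{S}(f) = O(s(f)^{\varepsilon})$ for some $\varepsilon < 1$ by interpolating between the influence bound and a partial version of the recursion.
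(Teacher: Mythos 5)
You have correctly identified that this statement is left as an open \emph{conjecture} in the paper, not proved: the authors explicitly remark that choosing a good set $H$ to restrict in the $\textbf{S}$-potential induction is ``tricky business -- neither choice from the previous two subsections will work in general here,'' which is exactly the obstruction you name (restricting a natural candidate $H$ may create fresh high-sensitivity inputs elsewhere in the cube). So your honest assessment that a direct port of the degree/certificate induction breaks down, rather than a completed proof, is consistent with the paper's framing, and there is no proof in the paper to compare against.

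Where your proposal diverges is in the partial evidence offered in lieu of a proof. You suggest hunting for a ``no-reappearance'' structural lemma, or falling back to a weaker target $\textbf{S}(f) = O(s(f)^{\varepsilon})$. The paper does neither; it instead supplies three independent pieces of supporting evidence: (i) the monotone case $n(f) \leq \tfrac{1}{2}\cdot 4^{s(f)}$ (Theorem \ref{mon_s}), which is immediate from $s(f) = C(f)$ for monotone $f$ together with Theorem \ref{C<1/2}; (ii) Lemma \ref{num_sens}, a consequence of Huang's theorem, which bounds by $(k-1)^2$ the number of coordinates $i$ inside any single (Fourier) monomial $M$ of $f$ with $\sens_i(f) \leq k$, and thereby yields $\textbf{S}(M,f) < 1.5$ for every such $M$ (Corollary \ref{monomial_sens}); and (iii) a tensorization argument showing that if the set of coordinates sensitive at every point meets a fixed subset $Y$ of $R(f)$ in at most one element, then $|Y| < 4^{s(f)}$. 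Your no-reappearance lemma and $\varepsilon$-interpolation fallback do not appear in the paper, and conversely the Huang-based monomial bound (ii) is a genuinely different angle you did not consider; it is worth absorbing, since it shows the local potential is already bounded on any single monomial, so the real difficulty is in controlling the sum across the many monomials that a relevant variable can inhabit.
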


We also remark that our methods from the previous two sections can in fact be used to prove the following ``geometric mean'' of Conjecture \ref{s_conj} with Theorems \ref{deg_thm} and \ref{C_thm}, which directly implies that there does not exist a function $f$ for which both Simon's theorem the bound in either Theorem \ref{deg_thm} or Theorem \ref{C_thm} are asymptotically tight. 
\begin{theorem}\label{mixed}
For any boolean function $f$, 
\bea
n(f) &=& O\left(\sqrt{2^{\deg(f)} \cdot 4^{s(f)}}\right) \\
n(f) &=& \widetilde{O}\left(\sqrt{4^{C(f)} \cdot 4^{s(f)}}\right).
\eea
\end{theorem}

The proof of Theorem \ref{mixed} is based on the observation that the conditions in Definition \ref{RRCM} are convex, and hence the general methods of the preceding sections can be applied to convex combinations of $\sens_i$, $\deg_i$, and $\cert_i$. There are some minor technicalities involved, but we omit the proof for brevity.

\subsection{Decision tree depth}\label{sec_DT}

For decision tree depth -- unlike the other complexity measures considered thus far -- getting a tight bound on $n(f)$ is trivial. Indeed, a depth $d$ binary tree has at most $2^d - 1$ nodes, so $n(f) \leq 2^{\DT(f)} - 1$, and this is obtained by the function which queries a different variable at each node of such a tree. However, the question becomes nontrivial when asked for \emph{monotone} Boolean functions. Let us denote the set of monotone Boolean functions of depth $d$ by $\mathcal{M}_d$ and define the quantities
\be\nn
\textbf{R}^{\DT}_d := \max_{f \in \mathcal{M}_d} n(f) \hspace{5pt} \text{ and } \hspace{5pt} \textbf{R}^{\DT} := \limsup_{d \to \infty} \frac{\textbf{R}^{\DT}_d}{2^d}.
\ee
The best known construction yields a function $f \in \mathcal{M}_d$ with $n(f) = \Theta(d^{-1/2}2^d)$. It is therefore possible that $\textbf{R}^{\DT} = 0$, and we conjecture this to be true (see Section \ref{sec_future_directions}). We provide some evidence in favor of this conjecture by showing that \be\label{mon_DT}\textbf{R}^{\DT} \leq \frac{1}{4}.\ee 

Our proof is based on the observation that unless both of the subfunctions $f_0, f_1$ of a node in a monotone decision tree have very short certificates, they must share a significant number of relevant variables.\footnote{This property, of course, does not hold in general for non-monotone decision trees!} This does not conform to the restriction-induction paradigm of all other arguments in this section, so we carry out the details in the Appendix.

\section[]{A constant factor improvement in the sensitivity conjecture}\label{sec_bs_d}

In their seminal 1992 paper, Nisan and Szegedy \cite{NS} proved an upper bound on the block sensitivity of any Boolean function $f$ in terms of its degree, namely
\be \label{bs_2_deg}
\bls(f) \leq 2 \deg(f)^2.
\ee
In \cite{Tal}, Avishay Tal gives a tensorization argument showing that the constant factor 2 in (\ref{bs_2_deg}) can be reduced to 1:
\be \label{bs_1_deg}
\bls(f) \leq \deg(f)^2.
\ee
In this section, we improve upon the original argument of Nisan and Szegedy to further improve the constant in (\ref{bs_1_deg}):
\begin{theorem}\label{improved_bs_deg}
For any Boolean function $f$, 
\be \label{b^2} \emph{\bls}(f)^2 - \emph{\bls}(f) \leq \frac{2}{3}(\deg(f)^4 - \deg(f)^2) \ee
and hence
\be \label{b^1} \emph{\bls}(f) \leq \sqrt{2/3} \cdot \deg(f)^2 + 1. \ee
\end{theorem}

For many pairs of complexity measures, the proof of the best-known relationships between them make use of the inequality (\ref{bs_1_deg}) as an intermediate step. Upgrading those proofs (see \cite{Midj}, \cite{Huang} and \cite{Nisan}) with Theorem \ref{improved_bs_deg} immediately improves those relations by a constant factor:

\begin{coro}
For any Boolean function $f$,
\bea
\label{improved_sens}\emph{\bls}(f) &\leq& \sqrt{2/3} \cdot s(f)^4 + 1 \\
\emph{\DT}(f) &\leq& \sqrt{2/3} \cdot \deg(f)^3 + \deg(f)\\
C(f) &\leq& \sqrt{2/3} \cdot s(f)^5 + s(f)
\eea
\end{coro}

In particular, (\ref{improved_sens}) improves on Huang's recent result $\bls(f) \leq s(f)^4$, which constitutes the best-known progress on the (strong) sensitivity conjecture, namely $\bls(f) \lesssim s(f)^2$. We note that while the bound in Theorem \ref{improved_bs_deg} can probably be improved further, there is a limit to this approach. The family obtained by tensorizing the function
\bea \nn
f(x_1, \dots, x_6) := \left(\sum_{i=1}^6 x_i\right) - \left(\sum_{1\leq i < j \leq 6} x_ix_j\right) + 
x_1x_3x_4 + x_1x_2x_5 + x_1x_4x_5  \\ \nn + x_2x_3x_4 + x_2x_3x_5 + x_1x_2x_6 + x_1x_3x_6 + x_2x_4x_6 +
x_3x_5x_6 + x_4x_5x_6
\eea
certifies that $\bls(f) \geq \deg(f)^{\log_3(6) \approx 1.63}$ is possible,\footnote{This example is due to Kushilevitz \cite{NW}, and achieves the best-known separation between $\bls(f)$ and $\deg(f)$.} and since Huang's theorem ($\deg(f) \leq s(f)^2$) is tight, combining the two inequalities can never yield a bound stronger than $\bls(f) \leq s(f)^{3.26}$. We also remark that, as a consequence of Theorem \ref{improved_bs_deg}, any function family generated by tensorizing a single example will always have a truly subquadratic separation between $\bls(f)$ and $\deg(f)$. So if it \textit{is} possible to quadratically separate $\bls(f)$ from $\deg(f)$, this will require a different proof technique.





\subsection{Proof of Theorem \ref{improved_bs_deg}}

 We begin by recalling Fact \ref{bs_reduction}, which says that the maximal block sensitivity among functions of degree $d$ is actually obtained by a function $f$ with (i) $f(0) = 0$ and (ii) $f(x) = 1$ for all vectors $x$ of Hamming weight 1. Let us say any $f$ satisfying properties (i) and (ii) is in \emph{standard form}. It is easy to see that any function $f(x)$ in standard form has a real multilinear polynomial expansion which looks like
\be \label{polynomial_f}
f(x_1, \dots, x_b) = x_1 + \cdots + x_b + \sum_{i < j} c_{ij}x_i x_j + \text{ (higher degree terms) }
\ee
where $b = \bls(f) = s(f)$. As it turns out, the coefficients $c_{ij}$ on the quadratic terms $x_ix_j$ in such functions can only take one of two values:
\begin{lem}\label{quad}
If $f(x_1, \dots, x_b)$ is in standard form, then each quadratic term $x_ix_j$ appears with coefficient $c_{ij} \in \{-1, -2\}$ in the polynomial expansion of $f$.
\end{lem}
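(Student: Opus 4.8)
The plan is to exploit the two defining properties of standard form — namely $f(0)=0$ and $f(e_i)=1$ for every weight-one vector — together with the fact that $f$ is Boolean on two particular inputs: $e_i$, $e_j$, and $e_i+e_j$ (the weight-two vector supported on coordinates $i$ and $j$). Fix $i<j$ and restrict all coordinates outside $\{i,j\}$ to $0$. Since $f$ is Boolean on $\{0,1\}^b$, the restricted function $g(x_i,x_j)$ is Boolean on $\{0,1\}^2$, and from (\ref{polynomial_f}) its multilinear expansion is exactly $g(x_i,x_j)=x_i+x_j+c_{ij}x_ix_j$. We already know $g(0,0)=0$, $g(1,0)=g(0,1)=1$ from standard form, so the only remaining degree of freedom is $g(1,1)\in\{0,1\}$. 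Plugging $x_i=x_j=1$ into the expansion gives $g(1,1)=1+1+c_{ij}=2+c_{ij}$, hence $c_{ij}=g(1,1)-2\in\{-2,-1\}$.

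First I would state precisely the restriction argument: setting $x_k=0$ for all $k\notin\{i,j\}$ kills every monomial except those involving only $x_i$ and $x_j$, leaving the linear terms $x_i+x_j$ (coefficient $1$ each, by the standard-form normalization in (\ref{polynomial_f})) and the single quadratic term $c_{ij}x_ix_j$; all higher-degree terms vanish since they must involve some $x_k$ with $k\notin\{i,j\}$ or a repeated variable (impossible in a multilinear monomial). Then I would invoke that a restriction of a Boolean function is Boolean, so $g:\{0,1\}^2\to\{0,1\}$. Finally I would evaluate $g$ at the four points of $\{0,1\}^2$, using $g(0,0)=f(0)=0$ and $g(1,0)=f(e_i)=1$, $g(0,1)=f(e_j)=1$, and then read off $c_{ij}=g(1,1)-2$, which lies in $\{-2,-1\}$ because $g(1,1)\in\{0,1\}$.

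There is essentially no obstacle here — the lemma is a two-line computation once the correct restriction is identified. The only point requiring a moment's care is making sure the linear coefficients really are both $1$ after restriction; this is guaranteed by property (ii) of standard form applied to $e_i$ and $e_j$, which forces the coefficient of $x_i$ (resp. $x_j$) in the full expansion to be exactly $1$ (since $f(e_i)=1$ and all other monomials vanish at $e_i$), so these coefficients survive the restriction unchanged. I expect the author's proof to be exactly this restriction-to-a-$2$-dimensional-subcube argument, and I would present it in that form.
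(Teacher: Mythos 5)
Your proof is correct and is essentially the paper's own argument: the paper simply evaluates the polynomial at the weight-two vector $e_{i,j}$ and observes that $1+1+c_{ij}\in\{0,1\}$, which is precisely your restriction-to-the-$\{i,j\}$-subcube computation at the point $(1,1)$.
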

\begin{proof}
For any pair $i, j$ of coordinates, let $e_{i,j}$ be the vector which has ones in the $i$th and the $j$th coordinates and zeroes elsewhere. Since $f$ is Boolean-valued, $f(e_{i,j}) \in \{0,1\}$. On the other hand, we can compute $f(e_{i,j})$ by plugging into the polynomial (\ref{polynomial_f}), which yields $1 + 1 + c_{ij} \in \{0,1\}$, since all higher degree terms evaluate to 0. 
\end{proof}

If we plug any real numbers $(\mu_1, \dots, \mu_b)$ in $[0,1]^b$ into equation (\ref{polynomial_f}) for the $x_i$, we can interpret the result as the expected value of $f(x)$ where the bits $x_i$ of $x$ are independently sampled Bernoulli$(\mu_i)$'s. In particular, taking all $\mu_i = \mu$, we obtain a univariate polynomial $p_f(\mu)$ whose relevant properties are summarized in the lemma below.
\begin{lem}\label{p_f}
If $f(x_1, \dots, x_b)$ is in standard form, then the polynomial $p_f(\mu)$ satisfies
\begin{enumerate}
    \item $\deg(p_f) \leq \deg(f)$
    \item $\sup_{x \in [0,1]}|p_f(x)| \leq 1$
    \item $|p_f''(0)| \geq b(b-1)$.
\end{enumerate}
\end{lem}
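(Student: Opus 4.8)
The plan is to verify the three properties of $p_f(\mu)$ one at a time, each being a short consequence of the setup in (\ref{polynomial_f}).

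First, for property 1: $p_f(\mu)$ is obtained from the multilinear polynomial expansion of $f$ by substituting $x_i = \mu$ for every $i$. Since each monomial $\prod_{i \in S} x_i$ of $f$ becomes $\mu^{|S|}$, the degree of $p_f$ as a univariate polynomial is at most the maximum $|S|$ over monomials appearing in $f$, which is exactly $\deg(f)$. (It may be strictly smaller if top-degree terms happen to cancel after the substitution, but we only need $\leq$.)

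Next, for property 2: as observed just before the lemma statement, if we sample each bit $x_i$ independently as a $\mathrm{Bernoulli}(\mu)$ random variable, then $p_f(\mu) = \E[f(x)]$. Since $f$ is boolean-valued, $f(x) \in \{0,1\}$ for every $x \in \{0,1\}^b$, so this expectation lies in $[0,1]$; in particular $|p_f(\mu)| \leq 1$ for all $\mu \in [0,1]$, which is what we want. (The probabilistic interpretation is valid precisely because $\mu \in [0,1]$, which is why the supremum is taken over that interval.)

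Finally, for property 3: we compute $p_f''(0)$ directly from the coefficients. Writing $f(x) = \sum_S c_S \prod_{i \in S} x_i$, we get $p_f(\mu) = \sum_S c_S \mu^{|S|}$, so $p_f''(0) = 2 c_2$ where $c_2 = \sum_{i<j} c_{ij}$ is the sum of all quadratic coefficients. By Lemma \ref{quad}, each $c_{ij} \in \{-1, -2\}$, so $c_2 \leq -\binom{b}{2}$, giving $|p_f''(0)| = 2|c_2| \geq 2\binom{b}{2} = b(b-1)$. This is the only step that uses the standard-form hypothesis in an essential way (via Lemma \ref{quad}), and it is the crux of the argument, since it is what converts the block-sensitivity parameter $b$ into a lower bound on a derivative — the kind of quantitative handle that a Markov-type inequality on $[0,1]$ can then exploit in the sequel. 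I do not anticipate a genuine obstacle here; the main care needed is just bookkeeping the factor of $2$ from differentiating $\mu^2$ twice and confirming the direction of the inequality $c_{ij} \le -1$.
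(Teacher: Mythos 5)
Your proof is correct and follows essentially the same route as the paper: property (1) from the substitution $x_i \mapsto \mu$, property (2) from the probabilistic interpretation of $p_f(\mu)$ as $\E[f]$ under i.i.d.\ $\mathrm{Bernoulli}(\mu)$ bits, and property (3) by computing $p_f''(0) = 2\sum_{i<j} c_{ij}$ and invoking Lemma~\ref{quad}. No substantive differences.
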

\begin{proof}
Item (1) follows directly from the definition of $p_f$, while item (2) follows from the interpretation of $p_f(\mu)$ as the expected value of the Boolean function $f$. To see (3), observe that (\ref{polynomial_f}) implies that 
\be\label{polynomial_pf}
p_f(\mu) = b\cdot \mu + \left(\sum_{i < j} c_{ij}\right)\mu^2 +\text{ (higher degree terms)},
\ee
and hence by Lemma \ref{quad}, 
\be\nn p''_f(0) = 2\cdot \sum_{i < j} c_{ij} \in \left[-4\binom{b}{2}, -2\binom{b}{2}\right],\ee
which clearly implies (3).
\end{proof}

In light of Lemma \ref{p_f}, to bound $b$ in terms of $\deg(f)$, it suffices to bound $|p_f''(0)|$ in terms of $\deg(p_f)$. This is accomplished by the following fact, which is a direct consequence of V. A. Markov's inequality \cite{Markov}.

\begin{fact}\label{markov_fact}
If $p(x)$ is a degree $d$ polynomial satisfying $0 \leq p(x) \leq 1$ for all $x \in [0,1]$, then
$$|p''(0)| \leq  \frac{2 d^2(d^2-1)}{3}.$$
\end{fact}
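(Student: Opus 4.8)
The plan is to deduce Fact~\ref{markov_fact} from the second-derivative case of V.~A.~Markov's inequality on $[-1,1]$. The hypothesis as stated lives on $[0,1]$ with range in $[0,1]$, so the first move is to normalize both the domain and the range by an affine substitution, producing a polynomial of the same degree $d$ that maps $[-1,1]$ into $[-1,1]$; then one reads off the bound on its second derivative at the appropriate endpoint and translates back. Everything except the invocation of Markov's inequality is a routine change of variables, so the proof is very short.

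Concretely, first I would set $q(t) := 2\,p\!\left(\tfrac{t+1}{2}\right) - 1$. Since $t \mapsto \tfrac{t+1}{2}$ maps $[-1,1]$ bijectively onto $[0,1]$ and $s \mapsto 2s-1$ maps $[0,1]$ onto $[-1,1]$, the hypothesis $0 \le p \le 1$ on $[0,1]$ gives $|q(t)| \le 1$ for all $t \in [-1,1]$, while $\deg q = \deg p = d$. Differentiating the defining relation twice yields $q''(t) = \tfrac12\,p''\!\left(\tfrac{t+1}{2}\right)$, and evaluating at the endpoint $t=-1$ (which corresponds to $x=0$) gives $q''(-1) = \tfrac12\,p''(0)$, i.e. $|p''(0)| = 2\,|q''(-1)|$.

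Next I would invoke V.~A.~Markov's inequality: for any polynomial $q$ of degree $d$ with $\sup_{[-1,1]}|q| \le 1$, one has $\sup_{[-1,1]} |q^{(k)}| \le T_d^{(k)}(1)$, where $T_d$ is the degree-$d$ Chebyshev polynomial; for $k=2$ the relevant value is $T_d''(1) = \tfrac{d^2(d^2-1)}{3}$ (the standard closed form $T_d^{(k)}(1) = \prod_{j=0}^{k-1}(d^2-j^2) / (2k-1)!!$ specialized to $k=2$). Combining with the previous step,
\[
|p''(0)| = 2\,|q''(-1)| \le 2\,T_d''(1) = \frac{2 d^2(d^2-1)}{3},
\]
which is precisely the claimed inequality. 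The only real content is citing the correct statement — namely V.~A.~Markov's higher-derivative inequality rather than the classical first-derivative Markov bound — and recording the Chebyshev-derivative value $T_d''(1) = d^2(d^2-1)/3$ correctly; so the sole ``obstacle'' is bookkeeping of the citation, with no extremal analysis or new estimate required.
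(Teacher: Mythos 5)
Your proof is correct and follows essentially the same route as the paper: both reduce the $[0,1]$ statement to V.~A.~Markov's second-derivative inequality on $[-1,1]$ by an affine change of variables, differing only in whether the range is rescaled to $[-1,1]$ (your $q = 2p - 1$) or merely recentered (the paper's $q = \tfrac12 - p$), which cancels out in the final bound. The citation of the $k=2$ Markov bound and the value $T_d''(1) = d^2(d^2-1)/3$ are both accurate.
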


\begin{proof}
Recall the famous Markov brothers' inequality, which states that if $q(x)$ is a degree $d$ real polynomial, then for each $k \geq 1$,
$$\sup_{x \in [-1,1]}|q^{(k)}(x)| \leq \frac{d^2(d^2-1^2)(d^2-2^2)\cdots (d^2 - (k-1)^2)}{1 \cdot 3 \cdot 5 \cdots (2k-1)} \sup_{x \in [-1,1]}|q(x)|.$$
In particular, for $k = 2$
\be\label{markov_2}
\sup_{x \in [-1,1]}|q''(x)| \leq \frac{d^2(d^2 - 1)}{3}\sup_{x \in [-1,1]}|q(x)|.
\ee
To translate (\ref{markov_2}) from $[-1,1]$ to $[0,1]$, we simply let $q(x) := \frac{1}{2} - p\left(\frac{1+x}{2}\right).$ Since $x \mapsto \frac{1+x}{2}$ maps $[-1,1]$ to $[0,1]$, we know that $$\sup_{x \in [-1,1]} |q(x)| = \sup_{x \in [0,1]} \left| \frac{1}{2} - p(x)\right| \leq \frac{1}{2}.$$
Similarly, since $q''(x) = -\frac{1}{4} p''(\frac{1+x}{2})$, we also have
\bea
\nn|p''(0)| \leq \sup_{x \in [0,1]} |p''(x)| &=& 4\sup_{x \in [-1,1]}|q''(x)|\\ \nn &\leq& 4\cdot \frac{d^2(d^2-1)}{3} \cdot \sup_{x \in [-1,1]}|q(x)| \\ \nn &\leq& \frac{2 d^2(d^2-1)}{3}.
\eea
\end{proof}
Combining (1), (2), and (3) from Lemma \ref{p_f} with Fact \ref{markov_fact} yields
(\ref{b^2}). This then implies (\ref{b^1}), because if $b^2 - b \geq \frac{2(d^4 - d^2)}{3}$, then
$$(b-1)^2 \leq b^2 - b \leq (2/3)\cdot (d^4 - d^2) \leq (2/3)\cdot d^4,$$
and taking square roots gives Theorem \ref{improved_bs_deg}.  \qed

\subsection{Block sensitivity vs. approximate degree:} 
In \cite{NS}, the authors also prove a bound on block sensitivity in terms of the \emph{approximate} degree, namely
\be\label{bs_2_adeg} \bls(f) \leq 6 \cdot (\widetilde{\deg}_{1/3}(f))^2. \ee
Again we can streamline their argument to improve the constant, this time from $6$ to $5$. We remark that, although $\adeg(f \circ g) = O(\adeg(f) \cdot \adeg(g))$ (by a result of Sherstov \cite{Sherstov}), the implicit constant in the $O(\cdot)$ obstructs us from reducing the constant in (\ref{bs_2_adeg}) to 1 via tensorization. Another difference between (\ref{bs_2_adeg}) and (\ref{bs_2_deg}) is that (\ref{bs_2_adeg}) is known to be tight up to the constant -- it is shown in \cite{NS} that $\OR_n$ can be $1/3$-approximated by a Chebyshev polynomial of degree $2\sqrt{n}$, and hence the 6 cannot be replaced by anything smaller than $\frac{1}{4}$ in (\ref{bs_2_adeg}).

\begin{theorem}\label{approx}
For any Boolean function $f$, 
$$\emph{\bls}(f) \leq 5 \cdot (\widetilde{\deg}_{1/3}(f))^2$$
\end{theorem}

\begin{proof} By reasoning as in Fact \ref{bs_reduction}, we may assume that $f$ is in standard form with (block) sensitivity $b$. Let $p(x_1, \dots, x_b)$ be a polynomial of degree $d = \widetilde{\deg}_{1/3}(f)$ satisfying $|p(x) - f(x)| \leq 1/3$ for all $x \in \{0,1\}^b$. Write
\be \nn
p(x) = c_0 + c_1x_1 + \cdots + c_bx_b + (\text{higher order terms}),\ee
and observe that 
\bea
|p(0) - f(0)| &\leq& 1/3 \implies |c_0| \leq 1/3\\
|p(e_i) - f(e_i)| &\leq& 1/3 \implies c_i + c_0 \geq 2/3.\eea 
Therefore each $c_i \geq 1/3$, and so $\sum_{i = 1}^b c_i \geq b/3$. Viewing $p$ as a function on $[0,1]^b$ via its multilinear extension, and considering the univariate function $q(t) := \frac{1}{2} - p(\frac{1+t}{2}, \frac{1+t}{2}, \dots, \frac{1+t}{2})$, we have that 
$$\sup_{-1 \leq t \leq 1} |q(t)| = \sup_{0 \leq t \leq 1} \left|\frac{1}{2} - p(t, t, \dots, t)\right| = \sup_{x \in [0,1]^b} \left|\frac{1}{2} - p(x)\right| \leq \frac{1}{2} + \frac{1}{3} = 5/6, $$
where the middle inequality is due to convexity/multilinearity. On the other hand, $q'(0) = \frac{1}{2} \sum_{i=1}^b (\partial_i p)(0) = \frac{1}{2}\sum_{i=1}^s c_i \geq b/6.$ By Markov's inequality (in the $k=1$ case), this implies 
\be \nn b/6 \leq \frac{5d^2}{6} \implies b \leq 5d^2.\ee \end{proof}

\section{Open problems and future directions}\label{sec_future_directions}

In addition to Conjecture \ref{s_conj}, we suggest some other questions left open by our work: \\

\noindent \textbf{Asymptotically stronger bounds on $n(f)$ for monotone functions:} For monotone functions $f$, our work shows stronger bounds on $n(f)$ in terms of $\deg(f)$, $s(f)$ and $\DT(f)$ than are known for general functions. However, these bounds still fall short of the best known construction. The best known construction for each of the three measures above is due to Wegener \cite{Weg}, which we now briefly describe. For each odd integer $k \geq 1$, we define the \emph{monotone address function}
$$\text{MAF}_{k}\left(x_1, \dots, x_k, \left\{y_S\right\}_{S \in \binom{[k]}{\lfloor k/2 \rfloor}}\right) := \MAJ(x_1, \dots, x_k) \bigvee_{ S \in \binom{[k]}{\lfloor k/2 \rfloor}} \left( \bigwedge_{i \in S} x_i \land y_S \right).$$
It isn't hard to show that for $f = \MAF_k$, 
$$n(f) = \Theta\left(\frac{2^{\DT(f)}}{\sqrt{\DT(f)}}  \right) = \Theta\left(\frac{2^{\deg(f)}}{\sqrt{\deg(f)}}\right) = \Theta\left(\frac{4^{s(f)}}{\sqrt{s(f)}}\right) = \Theta\left(\frac{4^{C(f)}}{\sqrt{C(f)}}\right).$$
We conjecture that this is the best possible for monotone functions. \\

\noindent \textbf{Approximate junta size:} If $s(f) = s$, then is $f$ $\varepsilon$-close to a $O_{\varepsilon}(4^s)$ junta? Verbin, Servedio and Tan conjectured that for \emph{monotone} $f$ with $\DT(f) = d$, $f$ must be $\varepsilon$-close to a $\poly_{\varepsilon}(d)$ junta, which would imply the same for $s(f)$. However, Kane \cite{Kane} showed this was false, by constructing a (random) monotone function with $\DT(f) = d$ which is not $0.1$-close to any $\exp(\sqrt{d})$-junta. This is tight up to a constant in the exponent by Freidgut's theorem and the OS inequality ($\I[f] \leq \sqrt{\DT(f)}$ for monotone $f$, see \cite{OS}). Since $s(f) \leq \DT(f)$, Kane's construction is also a monotone function with $s(f) = s$ that is not $0.1$-close to any $\exp(\sqrt{s})$-junta. \\

\noindent \textbf{Do large juntas have smaller separations?} If $n(f)$ is exponential in $s(f)$, $\deg(f)$, $C(f)$, and $\DT(f)$, then how are these measures related? For example, if $n(f) = 2^{\Omega(\deg(f))}$ then $s(f) = \Omega(\deg(f))$, by Simon's theorem; if $n(f) = 2^{\Omega(s)}$, then $\deg(f) = \Omega(s(f))$ by Nisan-Szegedy. Do the other directions hold? What can be said if $n(f) \geq 2^{s(f)^{1/100}}$? Is it possible for such ``large" functions to exhibit $\omega(1)$-separations between complexity measures? \\

\appendix 
\section*{Appendix}

Here we carry out the argument outlined in Section \ref{sec_DT} to prove that a monotone decision tree can depend on at most $2^{d-2} + 2$ variables.

\begin{lem}\label{DT_intersect} Let $f_0$, $f_1$ be the two subfunctions from the root node in a monotone decision tree. If neither $f_0$ nor $f_1$ is constant, then
$$C_{\min}^0(f_0) + C_{\min}^1(f_1) \leq |\emph{\Rel}(f_0) \cap \emph{\Rel}(f_1)| + 1.$$

\end{lem}
\begin{proof}
We first claim that every assignment to $\Rel(f_0) \cap \Rel(f_1)$ must either force $f_0 = 0$ or force $f_1 = 1$.
To see this, let $C:= \Rel(f_0) \cap \Rel(f_1)$. Let us decompose any assignment $\alpha$ to $\Rel(f)$ into $(\alpha_x, \alpha_0, \alpha_1, \alpha_C)$, where each component is the assignments to $x$ (the root node), $\Rel(f_0) \setminus \Rel(f_1)$, $\Rel(f_1) \setminus \Rel(f_0)$, and $C$ respectively. Suppose for the sake of contradiction that there is some assignment $\beta_C$ to $C$ which does not force $f_0 = 0$ or $f_1 = 1$ -- then we can pick assignments $\alpha, \alpha'$ such that: (i) $f_0(\alpha_0, \beta_C) = 1$ and (ii) $f_1(\alpha'_1, \beta_C) = 0$. But then $f(0, \alpha_0, \alpha'_1, \beta_C) = 1$ and $f(1, \alpha_0, \alpha'_1, \beta_C) = 0$, which violates monotonicity of $f$ since $(0, \alpha_0, \alpha'_1, \beta_C) \prec (1, \alpha_0, \alpha'_1, \beta_C)$, which proves our claim.

Now fix some ordering $x_1, \dots, x_{|C|}$ of $C$ and consider the $|C|+1$ assignments $$\alpha_i := (\underbrace{1, \dots, 1}_{i}, \underbrace{0 \dots, 0}_{|C| - i}), \text{  for } i = 0, 1, \dots, |C|.$$ By the claim above, each $\alpha_i$ forces either $f_0 = 0$ or $f_1 = 1$. In particular, we know that $\alpha_0$ forces $f_0 = 0$ and $\alpha_{|C|}$ forces $f_1 = 1$. (Indeed, if $\alpha_0$ does not force $f_0 = 0$, then $f_1 \equiv 1$, which we assumed is not the case, and likewise for $\alpha_{|C|}$.) Therefore, since $\alpha_i \prec \alpha_{i+1}$, there must be some $0 \leq i \leq |C| - 1$ for which $\alpha_i$ forces $f_0 = 0$ and $\alpha_{i+1}$ forces $f_1 = 1$. Hence, by monotonicty, there is a 1-certificate for $f_1$ fixing only the variables $\{x_1, \dots, x_{i+1}\}$ to 1, and a 0-certificate for $f_0$ fixing only the variables $\{x_{i+1}, \dots, x_{|C|}\}$ to 0. This implies $C_{\min}^0(f_0) + C_{\min}^1(f_1) \leq i + 1 + |C| - i = |C| + 1$. 
\end{proof}

We also need the following standard fact, whose easy proof we omit:

\begin{fact}\label{DT and/or}
Let $g$ be any function which does not depend on the variable $a$. Then $\emph{\DT}(a \lor g)= \emph{\DT}(a \land g) = 1 + \emph{\DT}(g)$.
\end{fact}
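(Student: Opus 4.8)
The plan is to derive both identities from the recursive structure of decision-tree depth that is immediate from the definition: for a non-constant $h : \{0,1\}^n \to \{0,1\}$,
\[
\DT(h) = 1 + \min_{v}\, \max\bigl(\DT(h|_{v=0}),\, \DT(h|_{v=1})\bigr),
\]
the minimum being over all variables $v$, while $\DT(h) = 0$ when $h$ is constant. I will also use that $\DT(g') \le \DT(h)$ whenever $g'$ is a restriction of $h$, and that $\DT$ is unchanged under negating the output or negating an input variable. (Strictly speaking $g$ must be non-constant here — or at least $g \not\equiv 1$ for the $\lor$ case and $g \not\equiv 0$ for the $\land$ case — since otherwise $a$ is not a relevant variable of $a \lor g$ resp.\ $a \land g$ and the stated value is off by one; this is the only regime in which the fact is applied.) I will treat $a \lor g$; the case $a \land g$ then follows by De Morgan together with the two invariances, since $\DT(a \land g) = \DT(\neg a \lor \neg g) = 1 + \DT(\neg g) = 1 + \DT(g)$.

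First note that $a \lor g$ is non-constant: it is $1$ when $a = 1$, and $0$ at $(a,x) = (0,x)$ for any $x$ with $g(x) = 0$ (such $x$ exists as $g \not\equiv 1$). In the recursive formula the choice $v = a$ contributes $\max(\DT(g), \DT(1)) = \DT(g)$, which already gives $\DT(a \lor g) \le 1 + \DT(g)$; for the reverse inequality it suffices to show every variable $v$ contributes at least $\DT(g)$ to the minimum. I would prove the following convenient reformulation by induction on the number of variables: writing $\phi(g') := \DT(a \lor g')$ for any $g'$ not involving $a$, one has $\phi(g') = 1 + \DT(g')$ whenever $g' \not\equiv 1$. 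If $g'$ is constant it must be $\equiv 0$, and then $\phi(g') = \DT(a) = 1 = 1 + \DT(g')$; otherwise one applies the recursive formula to $\DT(a \lor g')$ and bounds each term from below by $\DT(g')$.

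The term $v = a$ equals $\DT(g')$ exactly, and a dummy variable contributes $\phi(g') \ge \DT(g')$ since $g'$ is a restriction of $a \lor g'$. For a variable $b$ on which $g'$ genuinely depends, the term is $\max\bigl(\phi(g'|_{b=0}), \phi(g'|_{b=1})\bigr)$; if neither restriction is $\equiv 1$, the induction hypothesis turns this into $1 + \max_c \DT(g'|_{b=c}) \ge \DT(g')$, the last step being the recursive bound $\DT(g') \le 1 + \max_c \DT(g'|_{b=c})$. The one case that does not fall to the naive induction — and the point that needs care — is when one restriction, say $g'|_{b=0}$, equals the constant $1$, so that $a \lor g'|_{b=0} \equiv 1$ has depth $0$; but then $g'|_{b=1} \not\equiv 1$ (otherwise $g' \equiv 1$), so the induction hypothesis gives $\phi(g'|_{b=1}) = 1 + \DT(g'|_{b=1})$, while computing $g'$ by first querying $b$ and then $g'|_{b=1}$ shows $\DT(g') \le 1 + \DT(g'|_{b=1})$; hence the term is $\ge \phi(g'|_{b=1}) \ge \DT(g')$. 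Thus every term in the minimum is $\ge \DT(g')$, so $\phi(g') \ge 1 + \DT(g')$, which together with the upper bound completes the induction and the proof.
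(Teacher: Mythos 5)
Your proof is correct. The paper states this fact with its proof explicitly omitted as "easy," so there is nothing to compare against; your argument via the recursion $\DT(h)=1+\min_v\max_c \DT(h|_{v=c})$ is a sound (if somewhat more elaborate than strictly necessary) way to supply it. The upper bound is the trivial "query $a$ first" tree, and your induction for the lower bound correctly handles the two delicate points: dummy variables (disposed of by $\DT(g')\le \DT(a\lor g')$) and the case where some restriction $g'|_{b=c}\equiv 1$ kills the induction hypothesis on one branch (disposed of by $\DT(g')\le 1+\DT(g'|_{b=1-c})$). Your caveat is also a legitimate catch: as literally stated the fact fails when $g\equiv 1$ (for $\lor$) or $g\equiv 0$ (for $\land$), since then $a$ is irrelevant and the depth is $0$ resp.\ $1$ rather than $1+\DT(g)$; in the paper's only application (Lemma \ref{DT_bound}) the subfunctions $f_0=a\land g$ and $f_1=a\lor h$ are assumed non-constant, which rules out exactly these degenerate cases, so the usage is unaffected.
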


\begin{lem}\label{DT_bound}
For $d \geq 2$, $\emph{\textbf{R}}^{\emph{\DT}}_d \leq \max\left\{2 \emph{\textbf{R}}^{\emph{\DT}}_{d-1} - 2,  \,2+2\emph{\textbf{R}}^{\emph{\DT}}_{d-2}, \, 1+\emph{\textbf{R}}^{\emph{\DT}}_{d-1} \right\}.$
\end{lem}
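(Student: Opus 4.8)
\textbf{Proof plan for Lemma \ref{DT_bound}.}

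The plan is to case-split on the structure of the optimal monotone decision tree for $f$. Let $f \in \mathcal{M}_d$ achieve $n(f) = \textbf{R}^{\DT}_d$, let $x$ be the variable queried at the root, and let $f_0, f_1$ be the two subfunctions hanging off the root (so $f_0, f_1 \in \mathcal{M}_{d-1}$, and by monotonicity $f_0 \preceq f_1$ in the pointwise order). First I would dispose of the degenerate case: if either $f_0$ or $f_1$ is constant, then $f$ is essentially a conjunction or disjunction of $x$ with the nonconstant subfunction, so $n(f) \leq 1 + \textbf{R}^{\DT}_{d-1}$, landing in the third term of the max (here I'd invoke Fact \ref{DT and/or} to see the subfunction really has depth $\leq d-1$, though for counting variables one only needs $n(f) = 1 + n(\text{subfunction})$).

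For the main case, both $f_0$ and $f_1$ are nonconstant, so Lemma \ref{DT_intersect} applies and gives $C_{\min}^0(f_0) + C_{\min}^1(f_1) \leq |R(f_0)\cap R(f_1)| + 1$. The key counting identity is
\[
n(f) \leq 1 + n(f_0) + n(f_1) - |R(f_0) \cap R(f_1)|,
\]
since $R(f) \subseteq \{x\} \cup R(f_0) \cup R(f_1)$ and inclusion-exclusion handles the overlap. Now substitute the bound from Lemma \ref{DT_intersect}: writing $c_0 := C_{\min}^0(f_0)$ and $c_1 := C_{\min}^1(f_1)$, we get $n(f) \leq 2 + n(f_0) + n(f_1) - c_0 - c_1$. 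I'd then further split on whether $c_0 + c_1 \geq 2$ or not. If $c_0 + c_1 \geq 2$, then $n(f) \leq n(f_0) + n(f_1) \leq 2\textbf{R}^{\DT}_{d-1}$; to reach the sharper first term $2\textbf{R}^{\DT}_{d-1} - 2$ one should argue a little more carefully (e.g. if $c_0 + c_1 = 2$ then each $c_i = 1$, meaning $f_i$ has a width-one certificate of the relevant type — a single variable forces the value — and such a monotone function of depth $d-1$ cannot simultaneously have $n(f_i) = \textbf{R}^{\DT}_{d-1}$ unless the extremal bound already has slack; alternatively absorb this into the recursion by noting the $2+\cdots-c_0-c_1$ form already yields $\leq 2\textbf{R}^{\DT}_{d-1}$ and the $-2$ is recovered from an averaging/extremality argument on the subtrees).

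The remaining subcase is $c_0 + c_1 \leq 1$, i.e. $c_0 = 0$ or $c_1 = 0$; say $c_1 = C_{\min}^1(f_1) = 0$, which forces $f_1 \equiv 1$ — but that's the degenerate case already handled. So genuinely $c_0 + c_1 \geq 1$ in the main case, and the interesting boundary is $c_0 + c_1 = 1$. When, say, $c_0 = C_{\min}^0(f_0) = 1$: there is a single variable whose setting to $0$ forces $f_0 = 0$. One then looks at the subtree computing $f_0$ after that variable is queried — this should let one save a level of depth and land in the $2 + 2\textbf{R}^{\DT}_{d-2}$ term, by writing $n(f) \leq 2 + n(f_0) + (\text{controlled contribution of } f_1)$ and recursing on depth-$(d-2)$ pieces. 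The main obstacle I anticipate is exactly this bookkeeping at the $c_0 + c_1 = 1$ boundary: matching the clean three-term recursion requires carefully allocating the "$+1$" slack from Lemma \ref{DT_intersect} and the root variable, and arguing that a subfunction with a length-one certificate genuinely costs a depth reduction rather than just a certificate reduction. Everything else is inclusion-exclusion and invoking Lemmas \ref{DT_intersect}, \ref{DT and/or}.
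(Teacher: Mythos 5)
Your setup — the degenerate case, the inclusion--exclusion bound $n(f) \leq 1 + n(f_0) + n(f_1) - |R(f_0)\cap R(f_1)|$, and the substitution via Lemma \ref{DT_intersect} giving $n(f) \leq 2 + n(f_0) + n(f_1) - c_0 - c_1$ — all match the paper. But the case split that follows has a genuine gap. Once both subfunctions are nonconstant, $c_0, c_1 \geq 1$ automatically, so your proposed split on ``$c_0 + c_1 \geq 2$ vs.\ $\leq 1$'' is vacuous: the second branch never occurs, and the ``interesting boundary $c_0 + c_1 = 1$'' you later focus on is impossible in the main case. You are thus left with a single subcase in which you only obtain $n(f) \leq n(f_0) + n(f_1) \leq 2\textbf{R}^{\DT}_{d-1}$, short by $2$ of the lemma's first term, and the proposed remedies (``the extremal bound already has slack,'' an ``averaging/extremality argument on the subtrees'') are not arguments; in fact $c_i = 1$ does not in general force $n(f_i) < \textbf{R}^{\DT}_{d-1}$, so that route cannot be made to work.

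The fix, which is what the paper does, is to split on the \emph{individual} values of $c_0, c_1$ rather than their sum. If $\min\{c_0,c_1\} \geq 2$ then $c_0 + c_1 \geq 4$, and the substituted bound immediately gives $n(f) \leq n(f_0)+n(f_1)-2 \leq 2\textbf{R}^{\DT}_{d-1}-2$; the $-2$ comes from the large intersection, not from any extremality claim. If $c_0 = c_1 = 1$, then each $f_i$ has a singleton certificate, so (using monotonicity) $f_0 = a\land g$ and $f_1 = a\lor h$ for some $g,h$ not depending on $a$, and Fact \ref{DT and/or} gives $\DT(g),\DT(h) \leq d-2$; counting $\{x,a\}\cup R(g)\cup R(h)$ gives $n(f) \leq 2 + 2\textbf{R}^{\DT}_{d-2}$. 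Finally if $\{c_0,c_1\}=\{1,2\}$, say $c_0=1$, then the same depth reduction applies to $f_0$ only, yielding $n(f_0) \leq 1 + \textbf{R}^{\DT}_{d-2}$ while $c_0+c_1=3$ gives $|R(f_0)\cap R(f_1)|\geq 2$; combining gives $n(f) \leq \textbf{R}^{\DT}_{d-1} + \textbf{R}^{\DT}_{d-2}$, which is dominated by $\max\{2\textbf{R}^{\DT}_{d-1}-2,\ 2+2\textbf{R}^{\DT}_{d-2}\}$. In short: the key observation you are missing is that $c_i = 1$ is exactly the condition that buys a \emph{depth} reduction (via Fact \ref{DT and/or}), while $c_i \geq 2$ is exactly the condition that buys the $-2$; the three-term max in the lemma is organized around this trichotomy, not around $c_0+c_1$.
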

\begin{proof}
Let $f \in \mathcal{M}_d$, for $d \geq 2$. We consider the possible values of $c_0 := C_{\min}^0(f_0)$ and  $c_1:= C_{\min}^1(f_1)$. If either $c_0 = 0$ or $c_1 = 0$ (i.e. one of the subfunctions is constant), then $n(f) \leq 1 + \textbf{R}_{d-1}^{\DT}$. 

Otherwise, $c_0, c_1 \geq 1$ and Lemma \ref{DT_intersect} applies. If $c_0 + c_1 \geq 4$, then by the lemma, 
\bea \nn n(f) &\leq& 1+ n(f_0) + n(f_1) - |\Rel(f_0) \cap \Rel(f_1)| \\ &\leq& n(f_0) + n(f_1) - 2 \\ \nn &\leq& 2\textbf{R}_{d-1}^{\DT} - 2.
\eea
If $c_0 = c_1 = 1$, then we can write $f_0 = a \land g$ and $f_1 = a \lor h$ for some functions $g$ and $h$ which do not depend on $a$. By Fact \ref{DT and/or}, $\DT(g) = \DT(f_0) - 1 \leq d-2$, and similarly $\DT(h) \leq d-2$, so 
$n(f) \leq 1 + 1 + n(g) + n(h) \leq 2 + 2\textbf{R}^{\DT}_{d-2}.$

Finally, it remains to consider the case when $\{c_0, c_1\} = \{1,2\}$. Without loss of generality, suppose $c_0 = 1$. It follows that we can write $f_0 = a \land g$, for some function $g$ which does not depend on $a$. By Fact \ref{DT and/or}, $\DT(g) = \DT(f_0) - 1 \leq d-2$, and hence 
\bea \nn
n(f) &\leq& 1 + n(f_0) + n(f_1) - |\Rel(f_0) \cap \Rel(f_1)|\\ 
&\leq& 1 + \textbf{R}_{d-1}^{\DT} + 1 + \textbf{R}_{d-2}^{\DT} - 3 \\ \nn
&=& {\textbf{R}}^{{\DT}}_{d-1} + {\textbf{R}}^{{\DT}}_{d-2} - 1 \\ \nn &\leq& 2{\textbf{R}}^{{\DT}}_{d-1} - 2.
\eea
\end{proof}

\noindent\textit{Proof of (\ref{mon_DT}):} Since $\textbf{R}_{1}^{\DT} = 1$, Lemma \ref{DT_bound} immediately implies $\textbf{R}_{2}^{\DT} \leq 2$, $\textbf{R}_{3}^{\DT} \leq 4$, $\textbf{R}_{4}^{\DT} \leq 6$, and $\textbf{R}_{5}^{\DT} \leq 10$. It is also easy to construct explicit examples showing that these all of these inequalities are actually equalities -- in fact, if $g(x) \in \mathcal{M}_{d-2}$, then the function $$f(a,b,x,y) = ((\neg a) \land (b \land g(x))) \lor (a \land (b \lor g(y))) \in \mathcal{M}_d$$ has $2n(g) + 2$ relevant variables. Therefore $\textbf{R}^{\DT}_d \geq 2\textbf{R}^{\DT}_{d-2} + 2$, and the bound $\textbf{R}^{\DT}_d \leq 2\textbf{R}^{\DT}_{d-1} - 2$ becomes the dominant bound in the lemma for $d \geq 4$. We can rewrite this inequality as $$(\textbf{R}^{\DT}_d - 2) \leq 2(\textbf{R}^{\DT}_{d-1} - 2) \, \, \text{ for } d \geq 5,$$ and therefore $(\textbf{R}^{\DT}_d - 2) \leq 2^{d-5}(10-2) = 2^{d-2} \implies \textbf{R}^{\DT}_d \leq 2^{d-2} + 2$ and $\textbf{R}^{\DT} \leq \frac{1}{4}$. \qed


\bibliographystyle{amsplain}


\begin{dajauthors}
\begin{authorinfo}[jake]
  Jake Wellens\\
  narf1899\imageat{}gmail\imagedot{}com \\
\end{authorinfo}
\end{dajauthors}

\end{document}